\newtheorem{theorem}{Theorem}[section]
\newtheorem{proposition}[theorem]{Proposition}
\newtheorem{lemma}[theorem]{Lemma}
\newtheorem{corollary}[theorem]{Corollary}
\newtheorem*{theorem*}{Theorem}
\theoremstyle{definition}
\theoremstyle{remark}
\newtheorem{remark}[theorem]{\rm\bf Remark}
\begin{document}

\title{Bifurcations and the exchange of stability with coinfection}

\author{J. Andersson and V. Kozlov and V.G.~Tkachev and U. Wennergren}
	\begin{abstract}
	We perform a bifurcation analysis on an SIR model involving two pathogens that influences each other. Partial cross-immunity is assumed and coinfection is thought to be less transmittable then each of the diseases alone. The susceptible class has density dependent growth with carrying capacity $K$. Our model generalises the model developed in \cite{part1} and \cite{part2} by introducing the possibility for coinfected individuals to spread only one of the diseases when in contact with a susceptible. We perform a bifurcation analysis and prove the existence of a branch of stable equilibrium points parmeterized by $K$. The branch bifurcates for some $K$ resulting in changes in which compartments are present as well as the overall dynamics of the system. Depending on the parameters different transition scenarios occur.
	\end{abstract}


\maketitle

\section{Introduction}
Mathematical models for infectious diseases are valuable as a tools for predicting the future dynamics of diseases as well as finding the best preventative strategies. Usually the mathematical model involves only one pathogen, though in many cases two or more pathogens of different variants or strain can influence the spread of each other. Also the health effect of being infected by several diseases at once can be much more detrimental than being infected by one disease at a time.

In \cite{SKTW18a} an SIR model involving two pathogens that can co-infect is developed. The model takes into consideration density dependent growth of the susceptible population but ignores it for the other compartments. The density dependence is modelled with a logistic growth term which can be modelled with a carrying capacity parameter $K$. For simplicity infected individuals are assumed to not give birth. This assumption is motivated, even if infected individuals can give birth, when the disease compartments are small. The model can even work if the disease compartments are relatively large. This is because the logistic growth term makes it so that any flow out of the susceptible compartment, including individuals being infected, increases the generation of new susceptibles i.e. births. This process compensate for the ignored birth in the disease compartments. In this simple model the only way for an individual to become coinfected is by being infected by a coinfected individual.

The papers \cite{part1} and \cite{part2}, which are a partition of a larger paper, improves on the model developed in \cite{SKTW18a} by introducing the possibility for two individuals infected by different pathogens or strains to infect each other and thus become coinfected. Finally in \cite{GKTW} the model is developed further by introducing the possibility for healthy individuals to only be infected by one of the diseases when getting in contact with a coinfected individual. In all of the papers \cite{GKTW}, \cite{part1}, \cite{part2} the introduced transmission rates, which all distinct themselves from the rest of the transmission rates by describing interactions between two compartments that in turn increases a third compartment, are assumed to be small. This constraint is due to our bifurcation approach, but can also have epidemiological motivations such as strong cross immunity.

In \cite{SKTW18a} it was discovered that there exists a branch of uniquely stable equilibrium points parametrized by $K$. The way the equilibrium point of this branch transitions between different types is determined by the parameters other than $K$ and it turns out that there are four different scenarios (By the type of the equilibrium point we are referring to which of the compartments are non-zero). For this model an exact description could be found for the branch.

In \cite{part1} and \cite{part2} we get a similar branch to the one found in \cite{SKTW18a} that only differs by a term that is the order of magnitude of the newly introduces parameters which are considered to be small. The transitions between different types are basically the same as in \cite{SKTW18a}. However we get a deviation for the scenario where the equilibrium point of the branch is an inner equilibrium for large $K$ in this case the branch can lose its stability for large $K$, this results in a Hopf bifurcation.

In \cite{GKTW} we get a different scenario for the branch. In this case, the existence of some types of equilibrium points that in previous papers where possible are now impossible since contrary to \cite{SKTW18a}, \cite{part1}, \cite{part2} single infections now necessarily must exist if coinfection is present. From \cite{GTKW} we know that there is a branch of uniquely stable equilibrium points starting of as disease free point that transitions into a point where the more transmissible disease is present. There exist a point $K=K_0$ where this branch loses its stability and it is believed that, similar to previous models, the branch bifurcated into a stable coexistence equilibrium point.

Their exist many relevant mathematical studies on the interaction between multiple strains or diseases, see for instance \cite{ci1},\cite{ci2},\cite{ci3},\cite{ci4},\cite{ci5},\cite{ci6}.

In this paper we proceed with the work in \cite{GKTW} and prove that the bifurcation into a uniquely stable coexistence point indeed occurs. We also derive conditions for stability of the inner equilibrium point.  In contrast to the other equilibrium points the expression of the inner equilibrium is very implicit, and it is impossible to draw any conclusion by direct methods. Instead, we develop a certain bifurcation technique to obtain our main result.

%

\section{The model and the main result}


Let us consider the system appearing in a study of SIR model with coinfections, see \cite{GKTW}. The model describes the progression of two transmittable diseases, denoted disease 1 and disease 2, infecting a population of susceptibles with size $S(t)$, more precisely
\begin{eqnarray}
&&\Big(r(1-\frac{S}{K})-\alpha_1I_1 -\alpha_2I_2 -(\beta_1 +\beta_2 + \alpha_3)I_{12}\Big)S=S',\label{SIRmodelrow1}\\
&& (\alpha_1S-\eta_1I_{12} -\gamma_1I_2 -\mu_1)I_1 +\beta_1SI_{12}=I'_1\label{SIRmodelrow2}\\
&&(\alpha_2S-\eta_2I_{12}-\gamma_2I_1 -\mu_2)I_2 +\beta_2SI_{12}=I'_2,\label{SIRmodelrow3}\\
&&( \alpha_3S+ \eta_1I_1 +\eta_2I_2 -\mu_3)I_{12} +( \gamma_1 +\gamma_2)I_1I_2=I'_{12}\label{SIRmodelrow4}\\
&&\rho_1 I_1+\rho_2I_2+\rho_3 I_{12}-d_4 R=R',\label{submodel2}
\end{eqnarray}
where we use the following notation:
\begin{itemize}
\item $I_1$ and $I_2$ are infected classes from strain 1 and strain 2 respectively;
\item $I_{12}$ is the compartment consisting of individuals infected by both diseases;
\item $R$ represents the recovered class.
\end{itemize}
Also, \textit{complete cross immunity is assumed}, meaning if an individual becomes immune to one of the diseases it will necessarily be immune to both diseases.

Following \cite{Allen,Bremermann,Zhou}, we shall assume a limited population growth by making the per capita growth rate depend on the density of the population. We also consider different recovery rates for each infected class (see \eqref{submodel2}). The fundamental  parameters of the system are:
\begin{itemize}
\item $b$ is the birthrate,
\item
$K$ is the carrying capacity,
\item
$\rho_i$ is the recovery rate for each infected class ($i=1,2,3$),
\item
$\gamma_i$ is the rate at which individuals infected with one strain get infected with  the other strain from single infected individuals and move to the coinfected class ($i=1,2$).
\item $\mu_i' $ is the death rate of each class,  $( i=1,2,3,4)$,
\item
$\alpha_1$, $\alpha_2$, $\alpha_3$  are the rates of transmission of strain 1, strain 2 and both strains (in the case of coinfection),
\item $\beta_i$ is the rate at which susceptiles contracts disease $i$ but not the other disease from a coinfected individuals  $i=1,2$

\item and
$\eta_i$ is the rate at which individuals infected by one strain gets coinfected by the coinfected class $( i=1,2)$ and $\mu_i=\rho_i+\mu_i', i=1,2,3.$
\end{itemize}

\tikzset{
  big arrow/.style={
    decoration={markings,mark=at position 1 with {\arrow[scale=2,#1]{>}}},
    postaction={decorate},
    shorten >=0.4pt}}
\def\FIG{
\begin{figure}[h]
	\begin{center}
		\begin{tikzpicture}[every node/.style={ minimum height={1cm},minimum width={2cm},thick,align=center}]

		\node[draw] (I1) {$I_1$};
		\node[draw, above right=of I1] (S) {$S$};
		\node[draw, below=of S] (I12) {$I_{12}$};
		\node[draw, below=of I12] (R) {$R$};
		\node[draw, right= of I12] (I2) {$I_2$};
		\draw[->] (S) -- (I12) ;
		\draw[->,] (I2) -- (I12);
		\draw[->,big arrow] (I1) -- (I12);
		\draw[->,big arrow] (I12) -- (R);

    \draw[->,big arrow] (S) .. controls (2,2) .. (I1) ;
    \draw[->,big arrow] (S) .. controls (4,2) .. (I2) ;

 \draw[->,big arrow] (I2) .. controls (5,-2) .. (R) ;
  \draw[->,big arrow] (I1) .. controls (1,-2) .. (R) ;

		\node[left =of I1] at (2.9,1.5) {$\alpha_1$};
		\node[left =of I1] at (7.2,1.5) {$\alpha_2$};
		\node[left =of I12]at (3.5,0.25) {$\eta_1,\gamma_1$};
		\node[left =of I1] at (5.3,1) {$\alpha_3$};
		\node[left =of I1] at (4.5,-0.9) {$\rho_3$};
		\node[left =of I1] at (6.5,0.25) {$\eta_2,\gamma_2$};
		\node[left =of I2] at (2.6,-1.8) {$\rho_1$};
		\node[left =of I1] at (7.5,-1.8) {$\rho_2$};
		
		\end{tikzpicture}
	\end{center}
	\caption{ Flow diagram for two strains coinfection model.}
	\label{flowdiag}
\end{figure}
}
All the parameters
\begin{equation}\label{Bparam}
{\mathcal B}=(r,\alpha_1,\alpha_2,\alpha_3,\mu_1,\mu_2,\mu_3,\eta_1,\eta_2), \text{ }K\text{ and }\;\;{\mathcal I}=(\gamma_1,\gamma_2,\beta_1,\beta_2)
\end{equation}
 are positive. The reason that we split the parameters into three groups ${\mathcal B}$, $K$ and ${\mathcal I}$ is the following. First that we intend to parameterize branches of equilibrium points by $K$. Second,  the parameters ${\mathcal J}$ are only relevant
when the individuals being infected does not move to the same compartment as the infecter. We will use in what follows a shorter vector notation for the parameters
$$
\alpha=(\alpha_1,\alpha_2,\alpha_3),\;\;\mu=(\mu_1,\mu_2,\mu_3),\;\;\eta=(\eta_1,\eta_2),
\;\;\gamma=(\gamma_1,\gamma_2)\;\;\mbox{and}\;\;\beta=(\beta_1,\beta_2).
$$
The parameters from ${\mathcal B}$ will be called basic and from ${\mathcal I}$ coinfection constants/parameters respectively. The confection constants are involved in terms describing double infection and it is reasonable to assume that they are small with respect to basic constants.
\newline

In addition to the parameters being positive it is also assumed that
\begin{equation}\label{assumption sigma}
    \sigma_1<\sigma_2<\sigma_3
\end{equation}
where
$$\sigma_i=\frac{\mu_i}{\alpha_i},\quad i=1,2,3$$
This corresponds to disease 1 being the disease most inclined to spread through a naive population and coinfection being the hardest to spread. The motivation for the coinfection being least likely to spread is because it requires both diseases to spread simultaneously. Another motivation is that we expect that atleast some of the coinfected have previously been single infected and have already started there process of acquiring immunity. The gradual process of immunisation is currently not captured by the model and probability of acquiring immunity is regarded as a memoryless distribution. As is discussed in \cite{part1} $\sigma_i$, $i=1,2,3$ represent thresholds on $K$ for which the reproductive number $R_0(I_i)=\frac{K}{\sigma_i}$ is equal to one. 

When all parameters in $\Omega$ are zero then it is possible to give a complete description of the dynamics by showing that there is always exactly one globally stable equilibrium point, see \cite{GKTW}. In \cite{part1} and \cite{part2} the model \eqref{SIRmodelrow1}-\eqref{SIRmodelrow4} is studied for the case $\gamma_1,\gamma_2>0$ and $\beta_1,\beta_2=0$ and it is shown that there exist an equilibrium branch parameterized by $K$ that for certain $K$ is a coexistence point. In the case $\gamma_1,\gamma_2>0$ and $\beta_1=\beta_2=0$ there can exist steady equilibrium points with non-zero coinfection compartment while the  compartment for the less infectious disease is zero. In other words, the lesser disease can only exist in a host together with the more infectious disease and not by itself. This scenario is hard to motivate epidemiologically and suggests that the model is incomplete. By introducing $\beta_1,\beta_2$, that represent rates of single infections cause by coinfected individuals, this possibility is removed and we get a more comprehensive model.

In \cite{SKTW18a} model \eqref{SIRmodelrow1}-\eqref{SIRmodelrow4} is thoroughly studied for the case when all the parameters of ${\mathcal I}$ are zero and exact expressions for each equilibrium are found.  When introducing the parameters of ${\mathcal I}$ there will for some choice of parameters exist an inner equilibrium point. The analytical expression of this point however will be too complex to be written out. It is presumed however that the dynamics when the parameters of $\Omega$ are positive will be similar to the dynamics when they are zero if the parameters of ${\mathcal I}$ are small enough. Therefore we will at the moment assume the parameter of ${\mathcal I}$ to be small,
 i.e. they will be estimated by a positive constant depending on the parameters from the first group. In what follows we will denote by $C$ or $c$ possibly with an index different positive constant depending on $\alpha$, $\beta$, $\eta$ and $r$.  Moreover, we use the notation $f=O(\bar{\gamma}+\bar{\beta})$ if $|f|\leq C (\bar{\gamma}+\bar{\beta})$, where
 $$
\bar{\gamma}=\gamma_1+\gamma_2,\;\;\bar{\beta}=\beta_1+\beta_2.
$$
By sufficiently small (large) we mean in what follows that a quantity is estimated by a constant, depending on $\mathcal B$, from above (below) respectively.

\begin{remark}
We note that in \cite{GTKW} the notations of the parameters are somewhat different from what is used in this paper. The parameters $K$ and $r$ in this paper can be expressed with the parameters of $K$, $\mu_0$, $b$ and $S^{**}$ in \cite{GTKW} as
$$
    K=S^{**}=K(1-\frac{\mu_0}{b})
$$
and
\begin{equation*}
    r=b-\mu_0
\end{equation*}
\end{remark}

In \cite{GTKW} equilibrium points are studied and expressions for all equilibrium points together with conditions for existence and stability are expressed. In \cite{GTKW} it is found that there exist a stable equilibrium branch parametrized by $K$. For small $K$ the equilibrium points of the branch are disease free, when $K$ reaches $\sigma_1$ they turn into a single disease point where the single disease is disease 1, which is no surprise since it is the most infectious one.

It is the aim of this paper to show that this branch under certain conditions will bifurcate to a branch of coexistence points. We will also study the stability of the branch.

As it was shown in \cite{SKTW18a}  the system (\ref{SIRmodelrow1})--(\ref{SIRmodelrow4}) with $\beta=0$ and $\gamma=0$ contains a single globally stable equilibrium point which depend on the ${\mathcal B}$-parameters and $K$. We are interesting in dependence of the dynamics on the parameters of the problem. It can be described as follows. For a fixed set of parameters from ${\mathcal B}$ there exists a \textit{continuous} branch of stable equilibrium points parameterized by $K\in (0,\infty)$. The $S$ component of the equilibrium points is separated from zero and to each stable equilibrium point we assign a three dimensional  \textit{binary} vector determining the type of the equilibrium point. For example the vector $(1,0,0)$ corresponds to the type of equilibrium $(S,I_1,I_2,I_{12})$, with $I_2=0$ and $I_{12}=0$. The above branches consist of stable equilibrium points of different types which transform from one type to another through bifurcation points.

It was shown in \cite{SKTW18a} that there are four types of branches (parameterized by $0<K<\infty$) which can be schematically written as follows
\begin{eqnarray}\label{Ju6a}
&&(0,0,0)\rightarrow (1,0,0),\\
\label{Ju6b}
&&(0,0,0)\rightarrow (1,0,0)\rightarrow (1,0,1)\rightarrow (0,0,1),\\
\label{Ju6c}
&&(0,0,0)\rightarrow (1,0,0)\rightarrow (1,0,1)\rightarrow (1,1,1)\rightarrow (0,1,1)\rightarrow (0,0,1),\\
\label{Ju6d}
&&(0,0,0)\rightarrow (1,0,0)\rightarrow (1,0,1)\rightarrow (1,1,1).
\end{eqnarray}
The type of bifurcation branches depends of the choice of the ${\mathcal B}$ parameters and is determined by two (dimensionless) constants
$$
\eta_1^*=\frac{r\eta_1}{\alpha_1\alpha_3(\sigma_3-\sigma_1)},\;\;\;\eta_2^*=\frac{r\eta_2}{\alpha_2\alpha_3(\sigma_3-\sigma_2)}.
$$
More precisely, the types (\ref{Ju6a})--(\ref{Ju6d}) are determined by the inequalities
\begin{equation}\label{Ju6e}
\begin{split}
 \eta_1^*&<1,\\
\eta_2^*&>\eta_1^*>1,\\
\eta_1^*&>\eta_2^*>1\\
\eta_1^*&>1>\eta_2^*\\
\end{split}
\end{equation}
respectively.

In the case of non vanishing $\gamma$ and $\beta$ the dynamics is different and will be considered here. Our \textit{main result} is  the following theorem

\begin{theorem}
Under the made assumptions,  there are only two types of branches. More precisely, the first one is the same as before -- the branch \eqref{Ju6a}, which occurs when $\eta_1^*<1$. The second one appears when $\eta_1^*>1$ and it bifurcates from the branch \eqref{Ju6a} for a certain value of $K$ and after that it consists of coexistence equilibrium points. Now the restrictions \eqref{Ju6e} reflect the following properties of the second (coexistence) branch. When the second inequality in \eqref{Ju6e} is satisfied then the coexistence branch is located near the branch \eqref{Ju6b}, if the third inequality in \eqref{Ju6e} holds then coexistence branch is close to \eqref{Ju6c} and finally if the last inequality in \eqref{Ju6e} is true then the coexistence branch is close to \eqref{Ju6d}. This property implies in particular, that for a fixed large $K$ small changes in the ${\mathcal B}$-constants in \eqref{Bparam} around surfaces $\eta_j^*=1$, $j=1,2$, or $\eta_1^*=\eta_2^*$ can cause large changes in the system dynamics.
\end{theorem}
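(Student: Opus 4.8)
The plan is to study the equilibrium set of \eqref{SIRmodelrow1}--\eqref{SIRmodelrow4} (the $R$-equation decouples and is discarded) as a function of $K$, with $\mathcal B$ fixed and $\mathcal I$ treated as a small perturbation. First I would record the \emph{single-infection branch}, which stays exact for $\gamma,\beta>0$: for $K>\sigma_1$ the point $U_0(K)=(S,I_1,0,0)$ with $S=\sigma_1$ and $I_1=\frac{r}{\alpha_1}(1-\sigma_1/K)$ is an equilibrium, and, joined with the disease-free point for $K\le\sigma_1$ (whose stability I take from \cite{GKTW}), it realises the branch \eqref{Ju6a}. The Jacobian at $U_0(K)$ is block triangular, the equations for $I_2,I_{12}$ carrying no linear part in $S,I_1$ there; the $(S,I_1)$ block is the usual logistically damped SIR block and is always stable, so stability is governed by the $2\times2$ block
$$
J(K)=\begin{pmatrix}\alpha_2(\sigma_1-\sigma_2)-\gamma_2 I_1 & \beta_2\sigma_1\\ (\gamma_1+\gamma_2)I_1 & \alpha_3(\sigma_1-\sigma_3)+\eta_1 I_1\end{pmatrix}
$$
acting on $(I_2,I_{12})$. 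Its trace is negative, and $\det J(K)=\alpha_2(\sigma_1-\sigma_2)\bigl(\alpha_3(\sigma_1-\sigma_3)+\eta_1 I_1\bigr)+O(\bar\gamma\bar\beta)$, whose leading part has the sign of $\alpha_3(\sigma_3-\sigma_1)-\eta_1 I_1$. Since $I_1$ increases from $0$ to $r/\alpha_1$ as $K$ runs over $(\sigma_1,\infty)$: if $\eta_1^*<1$ then $\eta_1 I_1<\alpha_3(\sigma_3-\sigma_1)$ for all $K$, so $\det J>0$ throughout and \eqref{Ju6a} is stable for every $K$ — the first type; if $\eta_1^*>1$ then $\det J(K)$ has a unique zero at $K_0=\frac{\sigma_1\eta_1^*}{\eta_1^*-1}+O(\bar\gamma+\bar\beta)>\sigma_1$, positive for $K<K_0$ and negative for $K>K_0$, with $\frac{d}{dK}\det J(K_0)\neq0$, so a single real eigenvalue crosses zero transversally at $K_0$ and \eqref{Ju6a} is stable only on $(\sigma_1,K_0)$.

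Next I would analyse the bifurcation at $(U_0(K_0),K_0)$. The kernel of the Jacobian is one dimensional, with $(I_2,I_{12})$-part the null vector $v$ of $J(K_0)$; the first row gives $v_{I_2}=\frac{\beta_2\sigma_1}{\gamma_2 I_1+\alpha_2(\sigma_2-\sigma_1)}\,v_{I_{12}}$, which is positive and of size $O(\bar\beta)$ when $v_{I_{12}}>0$. Thus the kernel points into the positive cone in \emph{both} $I_2$ and $I_{12}$; this is the structural consequence of $\beta_2>0$, namely that coinfection can no longer appear with $I_2=0$ because $\beta_2SI_{12}$ is a positive source for $I_2$. I would make the bifurcation precise either by the Crandall--Rabinowitz theorem along the trivial curve $U_0(K)$, or, more concretely, by using $s=I_{12}$ as continuation parameter: for small $s>0$ the equilibrium equations solve for $(S,I_1,I_2,K)$ by the implicit function theorem, producing $S=\sigma_1+O(s)$, $I_1>0$, $I_2=cs+O(s^2)$ with $c>0$, and $K=K_0+O(s)$ — an arc of genuine coexistence equilibria emanating from $(U_0(K_0),K_0)$. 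I would then compute the next order term in $K(s)$ (equivalently the second Lyapunov--Schmidt coefficient), using $\sigma_1<\sigma_2<\sigma_3$ and the smallness of $\mathcal I$, to show the arc is supercritical, i.e. lies in $K>K_0$; the exchange-of-stability principle then makes it locally asymptotically stable, the trivial branch having just shed one eigenvalue to the right half-plane. Since positivity of $I_{12}$ forces positivity of $I_1,I_2$ all along the continuation, the branch cannot change type, so for $\eta_1^*>1$ it is \eqref{Ju6a} up to $K_0$ and a single coexistence branch afterwards, and no third possibility occurs.

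To locate the coexistence branch relative to \eqref{Ju6b}--\eqref{Ju6d} I would invoke the explicit $\gamma=\beta=0$ description from \cite{SKTW18a}: for fixed $\mathcal B$ with $\eta_1^*>1$, exactly one of \eqref{Ju6b}, \eqref{Ju6c}, \eqref{Ju6d} is the unperturbed branch, selected by the position of $\eta_2^*$ relative to $1$ and $\eta_1^*$ as in \eqref{Ju6e}. I would then show the coexistence branch is an $O(\bar\gamma+\bar\beta)$ perturbation of this reference branch: on segments where all of $I_1,I_2,I_{12}$ are already positive this is the implicit function theorem applied to the equilibrium system; on the segments where one compartment vanishes (the $(1,0,1)$, $(0,1,1)$, $(0,0,1)$ pieces) that compartment is lifted to size $O(\bar\beta)$ by the $\beta_iSI_{12}$ source terms, so the corners where the reference branch changes type are rounded into the interior and a single coexistence branch results. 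Matching this with the local construction near $K_0$, on the overlap where both are valid, yields the global picture and the closeness assertions; the final sentence of the theorem is then immediate, since the selected reference branch, hence its perturbation, changes discontinuously as $\mathcal B$ crosses the surfaces $\eta_j^*=1$ or $\eta_1^*=\eta_2^*$.

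I expect the main obstacle to be twofold: pinning down the \emph{direction} of the bifurcation at $K_0$ together with the exchange of stability — since the coexistence equilibrium is too implicit for direct manipulation, one must extract the sign of a second-order reduction coefficient, which is precisely where the ordering $\sigma_1<\sigma_2<\sigma_3$ and the smallness of $\mathcal I$ have to be used carefully — and the global bookkeeping in the last step, where the naive perturbation of \eqref{Ju6b}--\eqref{Ju6d} is singular exactly at the type-transition corners and one needs uniform control of the $O(\bar\beta)$ lifting of the vanishing compartments there.
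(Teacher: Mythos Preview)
Your local analysis at $K_0$ --- block-triangular Jacobian on $G_{100}$, sign analysis of the $2\times 2$ block, one-dimensional kernel pointing into the open cone via $\beta_2>0$, supercritical Lyapunov--Schmidt reduction with exchange of stability --- matches the paper's Proposition~\ref{P1} and the abstract framework of Section~\ref{sectionbufurcation} essentially step for step.

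The global part, however, is handled differently. You propose to perturb each piece of the reference branch ${\mathcal S}_j$ separately and then match on overlaps, and you correctly flag that this is singular at the corners where the reference branch changes type. The paper sidesteps this matching problem entirely. Its two central technical devices are: (a) a classification lemma (Lemma~\ref{LJ28}) showing that \emph{any} coexistence equilibrium with $\bar\beta+\bar\gamma\le\varepsilon^2$ must lie within $O(\varepsilon)$ of some $G_{ijk}(K)$, with $K$ constrained to the appropriate interval; and (b) a global determinant bound (Theorem~\ref{Tdet1}): $\det J>0$ at \emph{every} coexistence equilibrium, proved via an explicit algebraic decomposition \eqref{M1} of $\det J$ into a dominant $\Delta_\alpha^2$ term plus positive terms carrying factors $\beta_i SI_{12}/I_i^2$, checked case by case using (a). With $\det J\neq 0$ everywhere, the implicit function theorem continues the local branch from $K_0$ for all $K>K_0$, uniqueness of the coexistence branch follows by continuation back to the only available bifurcation point, stability is read off from $\det J>0$ plus the fact that three eigenvalues stay in the left half-plane by continuity from ${\mathcal S}_j$, and the closeness to ${\mathcal S}_j$ is then a direct consequence of (a) rather than of any perturbation argument. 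This buys uniformity across the corners without any corner-by-corner analysis; your approach would eventually need a version of (b) anyway to rule out folds or secondary branches, so the paper's route is both cleaner and not really avoidable.
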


\subsection{Basic properties}

We will now study the existence and attributes of coexistence equilibrium points of \eqref{SIRmodelrow1}-\eqref{SIRmodelrow4}. Consequently we assume the derivatives to be zero and all compartments to be positive. We get the system of equations
\begin{eqnarray}
&&\Big(r(1-\frac{S}{K})-\alpha_1I_1 -\alpha_2I_2 -(\beta_1 +\beta_2 + \alpha_3)I_{12}\Big)S=0,\label{SIRequilibriumrow1}\\
&& (\alpha_1S-\eta_1I_{12} -\gamma_1I_2 -\mu_1)I_1 +\beta_1SI_{12}=0\label{SIRequilibriumrow2}\\
&&(\alpha_2S-\eta_2I_{12}-\gamma_2I_1 -\mu_2)I_2 +\beta_2SI_{12}=0,\label{SIRequilibriumrow3}\\
&&( \alpha_3S+ \eta_1I_1 +\eta_2I_2 -\mu_3)I_{12} +( \gamma_1 +\gamma_2)I_1I_2=0\label{SIRequilibriumrow4}.
\end{eqnarray}
For compactness we will use the notations
\begin{eqnarray*}
&& A_1=\frac{\alpha_1\alpha_3}{r}(\sigma_3-\sigma_1), \quad \eta_1^*=\frac{\eta_1}{A_1},
\\
&& A_2=\frac{\alpha_2\alpha_3}{r}(\sigma_3-\sigma_2), \quad \eta_2^*=\frac{\eta_2}{A_2},\\
&& A_3=\frac{\alpha_1\alpha_2}{r}(\sigma_2-\sigma_1).
\end{eqnarray*}
Note that all the parameters above are positive under assumption \eqref{assumption sigma}.

We always assume that
\begin{equation}\label{J6c}
\Delta_\alpha:=\alpha_2\eta_1-\alpha_1\eta_2\neq 0
\end{equation}
and put
$$
\Delta_\mu=\mu_2\eta_1-\mu_1\eta_2.
$$
These quantities satisfy the inequalities (see (7) in \cite{part2})
\begin{equation}\label{J27a}
\Delta_\mu>\sigma_1\Delta_\alpha\;\;\;\mbox{and}\;\;\;\Delta_\mu>\sigma_2\Delta_\alpha.
\end{equation}
Since
$$
\eta_1^*-\eta_2^*<\eta_1\frac{\alpha_2}{\alpha_1A_2}-\eta_2^*=\frac{\Delta_\alpha}{\alpha_1A_2}
$$
we have
$$
\Delta_\alpha>0\;\;\;\mbox{if $\eta_1^*>\eta_2^*$}.
$$
Using relations (5) and (8) in \cite{part2}, we get
\begin{equation}\label{J4b}
\sigma_3>\frac{\Delta_\mu}{\Delta_\alpha}>\sigma_2\;\;\;\mbox{if $\eta_1^*>\eta_2^*$}.
\end{equation}
Another useful relation is
\begin{equation}\label{J27b}
\eta_1^*-\eta_2^*=\frac{(\sigma_3\Delta_\alpha-\Delta_\mu)\alpha_3}{rA_1A_2}.
\end{equation}
We will assume in what follows that
\begin{equation}\label{J28a}
\eta_1^*\neq 1,\;\;\eta_2^*\neq 1\;\;\mbox{and $\eta_1^*\neq \eta_2^*$}.
\end{equation}

Let us remind some known results proved in \cite{SKTW18a}. The problem (\ref{SIRequilibriumrow1})--(\ref{SIRequilibriumrow4}) has three boundary equilibrium points (with one of the components being zero):
$$
O=(0,0,0,0),\;\;G_{000}=(K,0,0,0)\;\;\mbox{and}\;\;
$$
and
$$
G_{100}=\Big(\sigma_1,\frac{r}{K\alpha_1}(K-\sigma_1),0,0\Big),\;\;
G_{010}=\Big(\sigma_1,0,\frac{r}{K\alpha_2}(K-\sigma_2),0\Big).
$$
The point $O$ is unstable. The point $G_{000}$ is globally stable for $K\leq\sigma_1$. It is unstable for $K>\sigma_1$.
The point $G_{010}$ is always unstable for small $\gamma$.

Let us consider how the bifurcation point appears on the branch $G_{100}(K)$.
The Jacobian matrix at $G_{100}(K)$ is calculated easily as
$$
J=\begin{bmatrix}\label{JacobianG3}
    -r\frac{\sigma_2}{K} & -\alpha_1\sigma_1 & -\alpha_2 \sigma_1 & -(\alpha_3+\beta_2)\sigma_1
    \\
    \alpha_1 I_1^* & 0 & -\gamma_1 I_1^* & -\eta_1 I_1^*+\beta_1\sigma_1
   \\
   0 & 0 & -\alpha_2(\sigma_2-\sigma_1)-\gamma_2I_1^* & \beta_2 \sigma_1
   \\
   0 & 0 & \gamma I_1^*  & \alpha_3\sigma_1+\eta_1I_1^*-\mu_3.
    \end{bmatrix}
$$
Proposition 6 in \cite{GKTW} states that this equilibrium is stable if and only if
\begin{eqnarray*}
&&\det{H}=\left|\begin{matrix}
-\frac{rA_3}{\alpha_1}-\gamma_2 I_1^* & \beta_2\sigma_1
\\
\gamma I_1^* & -\frac{rA_1}{\alpha_1}+\eta_1I_1^*
\end{matrix}\right|>0
\end{eqnarray*}
The bifurcation point satisfies
\begin{equation}\label{J5a}
\Big(\frac{rA_3}{\alpha_1}+\gamma_2 I_1^* \Big)\Big( -\frac{rA_1}{\alpha_1}+\eta_1I_1^*\Big)+\beta_2\sigma_1\bar{\gamma} I_1^*=0.
\end{equation}
This equation with respect to $I_1^*$ has only one positive root
$$
I_1^*(K)=\frac{r}{\alpha_1\eta_1^*}+O(\beta_2\bar{\gamma}).
$$
It can be resolved with respect to $K$ and we get
\begin{equation}\label{K_0asymptotics}
K=K_0=\frac{\eta^*_1\sigma_1}{\eta_1^*-1}+O(\beta_2\bar{\gamma}).
\end{equation}
The second root of \eqref{J5a} is negative and can be estimated as $\leq -\frac{rA_3}{\alpha_1\gamma_2}$.

All statements of the following proposition can be verified quite straightforward
\begin{proposition}
Assume $\eta^*_1>1$ then there exist positive constants $C_1$ and $C_2$ depending on $\Gamma$ constants such that if $\bar{\beta}+\bar{\gamma}\leq C_1$
then the branch $G_{100}(K)$, $K>\sigma_1$, has a unique bifurcation point $K=K_0$ which has the asymptotics (\ref{K_0asymptotics}).
At $K=K_0$ one of the eigenvalues of the Jacobian matrix for $G_{100}$ is zero while the remaining eigenvalues has negative real part and are estimated by $\leq -C_2$.
The zero eigenvalue has an eigenvector
$$
    \Big(0,0,\beta_2\sigma_1,\; \frac{r}{\alpha_1}A_3+\gamma_2\Big(1-\frac{\sigma_1}{K}\Big)\Big).
$$
\end{proposition}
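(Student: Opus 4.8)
The plan is to exploit the block‑triangular structure of the linearization. Since the hyperplane $\{I_2=I_{12}=0\}$ is invariant for \eqref{SIRmodelrow1}--\eqref{SIRmodelrow4}, the Jacobian $J(K)$ at $G_{100}(K)$, in the ordering $(S,I_1,I_2,I_{12})$, is block upper triangular with vanishing lower‑left block and diagonal blocks the $(S,I_1)$‑block $B_{11}(K)$ and the $(I_2,I_{12})$‑block $B_{22}(K)$; using the identities $\alpha_2(\sigma_2-\sigma_1)=rA_3/\alpha_1$, $\mu_3-\alpha_3\sigma_1=rA_1/\alpha_1$ and $I_1^*(K)=\tfrac r{\alpha_1}\bigl(1-\tfrac{\sigma_1}{K}\bigr)$ one checks that $B_{22}(K)$ is precisely the matrix $H$ appearing in the stability criterion recalled above. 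Hence $\operatorname{spec}J(K)=\operatorname{spec}B_{11}(K)\cup\operatorname{spec}B_{22}(K)$, and, because $\det B_{11}(K)=\alpha_1^2\sigma_1 I_1^*(K)>0$ for every $K>\sigma_1$, a zero eigenvalue of $J$ on the branch can occur only where $\det B_{22}(K)=\det H=0$, i.e. exactly at the values of $K$ solving \eqref{J5a}. All constants below are meant to depend only on the basic parameters $\mathcal B$.

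First I would dispose of the $(S,I_1)$‑block: $\operatorname{tr}B_{11}<0$ and $\det B_{11}=\alpha_1^2\sigma_1 I_1^*(K)>0$, so for $K$ in any fixed interval $[\sigma_1+\delta,M]$ — in particular in a neighbourhood of $\eta_1^*\sigma_1/(\eta_1^*-1)$, which under $\eta_1^*>1$ is bounded away from $\sigma_1$ and bounded above — both eigenvalues of $B_{11}$ have real part $\le-c_1$ (if complex, the real part is $\operatorname{tr}B_{11}/2$; if real, both are negative and the larger is $\le-\det B_{11}/|\operatorname{tr}B_{11}|$), and $I_1^*(K)\ge c_0>0$ there. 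Next I would locate $K_0$ by reading \eqref{J5a} as an equation in the single unknown $x=I_1^*$: it is a polynomial of degree $\le2$ with leading coefficient $\gamma_2\eta_1\ge0$, linear coefficient $\tfrac{rA_3\eta_1}{\alpha_1}-\tfrac{rA_1\gamma_2}{\alpha_1}+\beta_2\sigma_1\bar\gamma$ (positive once $\bar\beta+\bar\gamma\le C_1$) and constant term $-r^2A_1A_3/\alpha_1^2<0$, hence it has a unique positive root $x_0$, and expanding gives $x_0=\tfrac r{\alpha_1\eta_1^*}+O(\beta_2\bar\gamma)$ — the equation factors when $\beta_2=0$ with positive root exactly $r/(\alpha_1\eta_1^*)$, so the correction is $O(\beta_2)$, and since $\beta_2$ enters \eqref{J5a} only through $\beta_2\sigma_1\bar\gamma I_1$ it is in fact $O(\beta_2\bar\gamma)$. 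Since $K\mapsto I_1^*(K)$ is an increasing bijection of $(\sigma_1,\infty)$ onto $(0,r/\alpha_1)$, and $x_0<r/\alpha_1$ precisely when $\eta_1^*>1$ and $\bar\beta+\bar\gamma\le C_1$ (this is where both hypotheses are used, and it fixes $C_1$), there is a unique $K_0>\sigma_1$ with $I_1^*(K_0)=x_0$; inverting the bijection, which is bi‑Lipschitz near $x_0$ with constants depending on $\mathcal B$, yields $K_0=\tfrac{\eta_1^*\sigma_1}{\eta_1^*-1}+O(\beta_2\bar\gamma)$, and since $\det B_{11}>0$ along the whole branch this is its only bifurcation point.

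Finally, at $K=K_0$ the block $B_{22}$ is singular, so $0\in\operatorname{spec}J(K_0)$; it is simple because $0\notin\operatorname{spec}B_{11}$ and the second eigenvalue of $B_{22}$ equals $\operatorname{tr}B_{22}=-\tfrac{rA_3}{\alpha_1}-\gamma_2I_1^*-\tfrac{rA_1}{\alpha_1}+\eta_1I_1^*$, which by $\eta_1I_1^*(K_0)=\tfrac{rA_1}{\alpha_1}+O(\beta_2\bar\gamma)$ equals $-\tfrac{rA_3}{\alpha_1}+O(\bar\beta+\bar\gamma)\le-\tfrac{rA_3}{2\alpha_1}$ once $\bar\beta+\bar\gamma$ is small; together with the bound for $B_{11}$ this gives $\operatorname{Re}\lambda\le-C_2$ for the three nonzero eigenvalues with $C_2=\min\{c_1,rA_3/(2\alpha_1)\}$. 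The eigenvector is obtained by solving $B_{22}\xi=0$: the first row forces $\xi$ proportional to $\bigl(\beta_2\sigma_1,\ \tfrac{rA_3}{\alpha_1}+\gamma_2I_1^*\bigr)$ in the $(I_2,I_{12})$‑coordinates, the second row being automatic because $\det B_{22}=0$, and substituting $I_1^*=\tfrac r{\alpha_1}\bigl(1-\tfrac{\sigma_1}{K_0}\bigr)$ turns this into the vector displayed in the statement. The one genuinely delicate point is quantitative rather than conceptual: every $O(\cdot)$ above and the constant $C_2$ must hold uniformly for all $\bar\beta+\bar\gamma\le C_1$ with a single $C_1=C_1(\mathcal B)$, which is secured by carrying explicit constants through the estimate for $x_0$ and through the inversion of $K\mapsto I_1^*(K)$; in particular the bound $x_0<r/\alpha_1$ (equivalently $K_0<\infty$) really does require $\eta_1^*>1$. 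Everything else is direct computation, as the paper indicates.
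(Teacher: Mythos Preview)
Your argument is essentially what the paper intends when it says the proposition ``can be verified quite straightforward''; the paper gives no further proof. The block upper–triangular structure of $J$, the location of $K_0$ via the quadratic \eqref{J5a} in $x=I_1^*$, the inversion $K\mapsto I_1^*(K)$, and the bounds on the three nonzero eigenvalues are all handled correctly.

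There is one genuine gap, in the eigenvector step. You correctly compute that $\xi=\bigl(\beta_2\sigma_1,\ \tfrac{rA_3}{\alpha_1}+\gamma_2I_1^*\bigr)$ spans $\ker B_{22}$, and then pad with zeros to claim $(0,0,\xi)$ is an eigenvector of $J$. But $J$ is block \emph{upper} triangular (it is the lower-left $2\times2$ block that vanishes, not the upper-right one), so $J(0,0,\xi)^T=(B_{12}\xi,\,0)^T$, and $B_{12}\xi\neq0$: its first component is $-\alpha_2\sigma_1\beta_2\sigma_1-(\alpha_3+\bar\beta)\sigma_1\bigl(\tfrac{rA_3}{\alpha_1}+\gamma_2I_1^*\bigr)<0$. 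The actual right eigenvector of $J$ for the zero eigenvalue is $\bigl(-B_{11}^{-1}B_{12}\xi,\ \xi\bigr)$, which has nonzero $(S,I_1)$-components. What the proposition records is really the null vector of the $(I_2,I_{12})$-block, i.e.\ the vector $\theta$ satisfying $\theta\hat H=0$ in the abstract bifurcation setup of Section~\ref{sectionbufurcation}; compare the explicit formula for $\theta$ in the proof of Proposition~\ref{P1}, which is exactly your $\xi$. That is the object the paper needs downstream, and the word ``eigenvector'' in the proposition is being used informally. Your computation of $\xi$ is therefore the right one for the paper's purposes, but the claim that $(0,0,\xi)$ is an eigenvector of the full Jacobian is false and should be reworded. (As a side remark, the substitution you announce does not literally reproduce the displayed vector: $\gamma_2I_1^*=\gamma_2\tfrac{r}{\alpha_1}\bigl(1-\tfrac{\sigma_1}{K}\bigr)$, whereas the proposition writes $\gamma_2\bigl(1-\tfrac{\sigma_1}{K}\bigr)$; this is a typo in the statement, not in your argument.)
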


For non-trivial equilibrium point we have the following estimates
\begin{equation}\label{K11ag}
\sigma_1\leq S\leq \sigma_3
\end{equation}
and
\begin{equation}\label{K11a}
I_1\leq\frac{r}{\alpha_1},\;\;I_2\leq\frac{r}{\alpha_2},\;\;I_{12}\leq\frac{r}{\alpha_3+\beta_1+\beta_2}.
\end{equation}

\subsection{Coexistence equilibrium points near $G_{100}(K_0)$.}

What we know thus far is that there occurs a bifurcation at $K=K_0$. However we still do not know if the resulting equilibrium point is a coexistence point especially that all compartments are positive. Proposition \ref{Pr1a} in section \ref{sectionbufurcation} gives us the asymptotics for the compartments $(y_1,y_2)=(I_2,I_{12})$ that starts of as zero at the bifurcation point. In order to use Proposition \ref{Pr1a} we first need to do the suitable change of variables as in Section~ \ref{section Change of variables}.

\begin{proposition}\label{P1}
Let $\eta_1^*>1$.
 Then there exist $C>0$ such that if $\bar{\beta}+\bar{\gamma}\leq C$ then the equilibrium point $G_{100}(K)=(\sigma_1,\frac{r}{\alpha_1K}(K-\sigma_1),0,0)$ bifurcates into a branch of inner equilibrium points $G^*(K)=(S^*,I_1^*,I_2^*,I_{12}^*$) at the point $K=K_0$ with asymptotics \eqref{K_0asymptotics}.
The inner equilibrium has the asymptotics
\begin{eqnarray*}
&&S^*=\sigma_1+O(|K-K_0|)
\\
&&I_1^*=\frac{r}{\eta_1^*\alpha_1}+O(|K-K_0|)
\\
&&I_2^*=\beta_2\Big(\frac{\sigma_1}{\alpha_2(\sigma_2-\sigma_1)}+O(|K-K_0|+\gamma_2)\Big)I_{12}^*
\\
&& I_{12}^*=\Big(\frac{r^2A_1A_3(\eta_1^*-1)^2}{\eta_1^*\mu_1\alpha_1\omega}+O(\gamma_2)\Big)(K-K_0)+O((K-K_0)^2)
\end{eqnarray*}
where
\begin{equation}\label{Jbc}
\omega=\frac{\alpha_2(\sigma_2-\sigma_1)r\eta_1^2}{\alpha_1^2\sigma_1K_0}+O(\bar{\beta}+\bar{\gamma}).
\end{equation}
Moreover the small eigenvalue of the Jacobian matrix on this curve has the following asymptotics
\begin{equation}\label{J1c}
\lambda_0(K)=-\frac{r\sigma_1}{\alpha_1K_0^2}(K-K_0)+O((K-K_0)^2)
\end{equation}
\end{proposition}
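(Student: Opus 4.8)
\emph{Proof sketch (plan).} The strategy is to reduce the equilibrium system \eqref{SIRequilibriumrow1}--\eqref{SIRequilibriumrow4} near $G_{100}(K_0)$ to a two--dimensional bifurcation problem in the ``small'' compartments $(y_1,y_2)=(I_2,I_{12})$ and then to apply the abstract bifurcation result of Section~\ref{sectionbufurcation}, Proposition~\ref{Pr1a}. After cancelling the factor $S>0$ in \eqref{SIRequilibriumrow1}, the $2\times2$ Jacobian of \eqref{SIRequilibriumrow1}--\eqref{SIRequilibriumrow2} with respect to $(S,I_1)$ at $y=(y_1,y_2)=(0,0)$ is invertible with determinant $\alpha_1^2 I_1^*$, which is bounded away from $0$ uniformly for $\bar\beta+\bar\gamma$ small near $K=K_0$; hence the implicit function theorem produces smooth functions $S=S(y_1,y_2,K)$, $I_1=I_1(y_1,y_2,K)$ with $S(0,0,K)=\sigma_1$ and $I_1(0,0,K)=\frac r{\alpha_1}\bigl(1-\frac{\sigma_1}K\bigr)$. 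Substituting these into \eqref{SIRequilibriumrow3}--\eqref{SIRequilibriumrow4} gives a reduced map $F(y_1,y_2,K)$ with $F(0,0,K)\equiv 0$ --- the persistent trivial branch being exactly $G_{100}(K)$ --- and the change of variables of Section~\ref{section Change of variables} puts $F$ in the normalized form required by Proposition~\ref{Pr1a}.

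I would then verify the hypotheses of Proposition~\ref{Pr1a}. A direct computation gives $D_yF(0,0,K)=H$, the matrix $H$ appearing before \eqref{J5a}; thus $\det D_yF(0,0,K)=\det H$ vanishes precisely at $K=K_0$, the kernel there is one--dimensional and spanned by the eigenvector exhibited in the proposition establishing the bifurcation point $K_0$. The crossing is transversal: $\det H$ depends on $K$ only through $I_1^*(K)=\frac r{\alpha_1}(1-\frac{\sigma_1}K)$, with $\frac{d}{dK}I_1^*(K_0)=\frac{r\sigma_1}{\alpha_1K_0^2}>0$ and $\frac{d}{dI_1^*}\det H=-\frac{r\eta_1A_3}{\alpha_1}+O(\bar\beta+\bar\gamma)\ne 0$, so $(\det H)'(K_0)=-\frac{r^2\eta_1A_3\sigma_1}{\alpha_1^2K_0^2}\bigl(1+O(\bar\beta+\bar\gamma)\bigr)$. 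The remaining quantitative input for Proposition~\ref{Pr1a} is the second--order (Hessian) coefficient of $F$ at $(0,0,K_0)$ along the kernel direction; this is precisely the constant $\omega$ of \eqref{Jbc}, and one checks it is bounded away from $0$ using $\eta_1^*>1$ and $\sigma_1<\sigma_2<\sigma_3$. Proposition~\ref{Pr1a} then yields a smooth branch $(y_1^*(K),y_2^*(K))$ through $(0,0)$ at $K=K_0$, linear in $K-K_0$ to leading order and tangent to the kernel, with slope $-(\det H)'(K_0)/\omega$; using $A_1=\eta_1/\eta_1^*$, $\mu_1=\alpha_1\sigma_1$ and $1-\frac{\sigma_1}{K_0}=\frac1{\eta_1^*}$ this simplifies to the asserted leading coefficient $\frac{r^2A_1A_3(\eta_1^*-1)^2}{\eta_1^*\mu_1\alpha_1\omega}$ of $I_{12}^*$, which is positive, so the branch runs into $K>K_0$ with $I_{12}^*>0$ there.

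The asymptotics of the remaining components follow by back--substitution. From \eqref{SIRequilibriumrow3},
\[
I_2^*=\frac{\beta_2 S^* I_{12}^*}{\mu_2+\gamma_2 I_1^*-\alpha_2 S^*+\eta_2 I_{12}^*}=\beta_2\Bigl(\frac{\sigma_1}{\alpha_2(\sigma_2-\sigma_1)}+O(|K-K_0|+\gamma_2)\Bigr)I_{12}^*,
\]
using $S^*=\sigma_1+O(|K-K_0|)$ and $\mu_2-\alpha_2\sigma_1=\alpha_2(\sigma_2-\sigma_1)$; and $S(0,0,K_0)=\sigma_1$, $I_1(0,0,K_0)=\frac r{\alpha_1}(1-\frac{\sigma_1}{K_0})=\frac r{\eta_1^*\alpha_1}$ give $S^*=\sigma_1+O(|K-K_0|)$ and $I_1^*=\frac r{\eta_1^*\alpha_1}+O(|K-K_0|)$. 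Since all leading coefficients are manifestly positive ($\eta_1^*>1$ for $I_{12}^*$, $\sigma_1<\sigma_2$ for $I_2^*$), $G^*(K)$ is a genuine coexistence point for $K$ slightly above $K_0$. For the small eigenvalue \eqref{J1c}: three of the four eigenvalues of the Jacobian at $G^*(K)$ remain $\le -C<0$ by continuity from the proposition establishing $K_0$ (as $G^*(K)$ is $O(|K-K_0|)$--close to $G_{100}(K_0)$), while the fourth, $\lambda_0(K)$, is governed by the one--dimensional reduced dynamics on the branch; the exchange of stability gives $\lambda_0(K)=-\mu'(K_0)(K-K_0)+O((K-K_0)^2)$, where $\mu(K)\approx \det H(K)/\operatorname{tr}M(K)$ ($M$ the $(I_2,I_{12})$--block of the Jacobian at $G_{100}(K)$) is the bifurcating eigenvalue of $G_{100}(K)$, and evaluating $\mu'(K_0)$ with $\frac{d}{dK}I_1^*(K_0)=\frac{r\sigma_1}{\alpha_1K_0^2}$ yields \eqref{J1c}.

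\emph{The main obstacle} is the second--order expansion. Because the bifurcating branch is tangent to $\ker D_yF(0,0,K_0)$, the leading nontrivial balance in the Lyapunov--Schmidt reduction is quadratic, so one genuinely needs the full Hessian of the reduced map $F$ at $(0,0,K_0)$ --- including the contributions entering through $\partial S/\partial y$ and $\partial I_1/\partial y$ --- to pin down both $\omega$ in \eqref{Jbc} and the leading coefficient of $I_{12}^*$. In parallel one must track rigorously which terms are $O(\bar\beta+\bar\gamma)$, so that all the non--degeneracy quantities ($(\det H)'(K_0)$, $\omega$, $\operatorname{tr}M(K_0)$, and the kernel/cokernel data of $H$) stay bounded away from $0$ uniformly in the coinfection parameters; this is what guarantees that the threshold $C$ in the statement depends only on $\mathcal B$.
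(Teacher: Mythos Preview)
Your overall plan --- cast the equilibrium system in the form $F(x,y;s)=0$, $yH(x,y;s)=0$ with $x=(S,I_1)$, $y=(I_2,I_{12})$, $s=K-K_0$, and invoke Proposition~\ref{Pr1a} --- is exactly the paper's route, and your back--substitution for $S^*,I_1^*,I_2^*$ is correct. There is, however, a real mismatch between your description of the mechanism and what Proposition~\ref{Pr1a} actually does.

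You call $\omega$ ``the second--order (Hessian) coefficient of $F$ at $(0,0,K_0)$ along the kernel direction'' and later list ``the full Hessian of the reduced map'' as the main obstacle. In the paper's framework this is not how $\omega$ arises. The point of Section~\ref{section Change of variables} is that the special product structure $yH=0$ lets one factor out the scalar $z_n$ and arrive at the \emph{regular} system \eqref{1x}--\eqref{11x}; $\omega$ in \eqref{13a}/\eqref{J1a} is then just the missing entry that makes the ordinary (first--order) Jacobian \eqref{12a} invertible, and the slope formula \eqref{15a} is $y(s)=\dfrac{\partial_s Q_n}{\omega}\,s\,\theta+O(s^2)$ with $Q_n=\theta H\widehat e^{T}$. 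No second derivatives of $F$ are needed. Concretely, the paper computes $\theta$, $\widehat e$, $Q_n(K)=\bigl(\eta_1+O(\bar\beta+\bar\gamma)\bigr)\bigl(x_2(K)-x_2^*\bigr)$, and then evaluates $\omega$ directly from \eqref{J1a} to obtain \eqref{Jbc}; the $I_{12}^*$ coefficient comes from $\partial_K Q_n\cdot\theta_2/\omega$, not from $-(\det H)'/\omega$. Your formula happens to give the right leading coefficient only because, to leading order in $\bar\beta+\bar\gamma$, $(\det H)'=\theta_2\cdot\partial_K Q_n$ --- but that identification is an accident of the $2\times2$ setting and the chosen normalisation of $\theta$, not the content of Proposition~\ref{Pr1a}.

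Similarly, for \eqref{J1c} the paper does not argue via exchange of stability and $\det H/\operatorname{tr}M$; it applies Proposition~\ref{PK1}, whose formula \eqref{162} gives $\lambda_0(K)=-\partial_K Q_n(\widehat x(K),0;K)\big|_{K=K_0}\,(K-K_0)+O((K-K_0)^2)$ directly. Your heuristic would ultimately yield the same leading term, but it requires an extra (unstated) argument that the small eigenvalue of the full $4\times4$ Jacobian at $G_{100}(K)$ agrees to first order with $\det H/\operatorname{tr}H$, which is precisely what Proposition~\ref{PK1} packages once and for all. So: keep your reduction, but replace the ``Hessian'' language and the $(\det H)'$ slope by the actual objects $Q_n=\theta H\widehat e^{T}$ and $\omega$ from \eqref{J1a}, and invoke \eqref{162} for the eigenvalue.
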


\begin{remark}
We have that
$$
\frac{I_2^*}{I_{12}^*}=\beta_2\Big(\frac{\sigma_1}{\alpha_2(\sigma_2-\sigma_1)}+O(|K-K_0|+\gamma_2)\Big)
$$
 so the ratio $\frac{I_2^*}{I_{12}^*}$ vanishes when $\beta_2\rightarrow 0$. From \cite{part1}, \cite{part2} we have that for $\beta=0$, the point $K=\frac{\eta^*\sigma_1}{\eta_1^*-1}$ is the bifurcation point for which the equilibrium branch studied in \cite{part1} and \cite{part2} goes from being a single infection point $(S^*,I^*,0,0)$ to being a coinfection point with zero disease 2 compartment $(S^*,I_1^*,0,I^*_{12})$. This completely agrees with our assumption that the dynamics of \eqref{SIRmodelrow1}-\eqref{SIRmodelrow4} will be similar for the case when $\beta=0$ and the cases where $\beta$ is sufficient small. However we do get an interesting deviation when $\beta>0$ as the $G_{100}$ equilibrium bifurcates directly into a coexistence point without being of the type $(S^*,I_1^*,0,I_{12})$ first.
\end{remark}
\begin{proof}
In order to apply results of section \ref{sectionbufurcation} we formulate \eqref{SIRequilibriumrow1}-\eqref{SIRequilibriumrow4} in the form \eqref{1}-\eqref{2}, in other words
we have
\begin{align*}
F(x,y;K)&=0
\\
yH(x,y;K)&=0
\end{align*}
where
$$
x_1=S,\;\;x_2=I_1,\;\;y_1=I_2\;\;\mbox{and}\;\;y_2=I_{12}.
$$
with
\begin{eqnarray*}
&&F_1(x,y;K)=\Big(r(1-\frac{x_1}{K})-\alpha_1x_2 -\alpha_1y_1 -(\beta_1 +\beta_2 + \alpha_3)y_2\Big)x_1,
\\
&&F_2(x,y;K)=(\alpha_1x_1-\eta_1y_2 -\gamma_1y_1 -\mu_1)x_2 +\beta_1x_1y_2
\end{eqnarray*}
and
\begin{eqnarray*}
&&H_{11}=\alpha_2x_1-\eta_2y_2-\gamma_2x_2 -\mu_2;\;\;H_{12}=( \gamma_1 +\gamma_2)x_2,
\\
&&H_{21}=\beta_2x_1,\;\;H_{22}=\alpha_3x_1+ \eta_1x_2 +\eta_2y_1 -\mu_3.
\end{eqnarray*}
The matrix $\hat H$ at the bifurcation point
 $$
 x_1^*=\sigma_1,\;\;x_2^*=\frac{r(K_0-\sigma_1)}{\alpha_1K_0},\;\;y_1=y_2=0
 $$
 is given by
\begin{eqnarray*}
\hat H=H(x^*(K_0),0;K_0)=\begin{bmatrix}
-\alpha_2(\sigma_2-\sigma_1)-\gamma_2x_2^* & \gamma x_2^* \\
\beta_2\sigma_1 & \eta_1x_2^*-\alpha_3(\sigma_3-\sigma_1)\\
\end{bmatrix}
\end{eqnarray*}
The vectors $\theta$ and $\widehat{e}$ from section \ref{sectionbufurcation} are given by
$$
\theta=(\beta_2\sigma_1,\alpha_2(\sigma_2-\sigma_1)+\gamma_2x_2^*)
$$
and
$$
\widehat{e}=\frac{1}{N}(\bar{\gamma}x_2^*,\alpha_2(\sigma_2-\sigma_1)+\gamma_2x_2^*),
$$
where
$$
N=\Big(\alpha_2(\sigma_2-\sigma_1)+\gamma_2x_2^*\Big)^2+\beta_2\bar{\gamma}\sigma_1x_2^*.
$$
The element $Q_2(K)=\theta H(K)\widehat{e}^T$ is given by
$$
Q_2(K)=(x_2(K)-x_2^*)\theta M\widehat{e}^T=(\eta_1+O(\bar{\beta}+\bar{\gamma}))(x_2-x_2^*).
$$
where
$$
M=\begin{bmatrix}
-\gamma_2 & \bar{\gamma} \\
0& \eta_1\\
\end{bmatrix},\;\;\;x_2(K)=\frac{r(K-\sigma_1)}{\alpha_1K}.
$$

To complete our preparation to apply the results from section \ref{sectionbufurcation} we need to evaluate the quantity $\omega$ given by (\ref{J1a}). First we note that
\begin{align*}
Q_n&=Q_2=\alpha_3x_1+\eta_1x_2+\eta_2y_1-\mu_3+O(\bar{\beta}+\bar{\gamma}),\\
\theta\cdot\nabla_yF(x^*,o)&=\alpha_2(\sigma_2-\sigma_1)(-\alpha_3\sigma_1,-\eta_1x_2^*)+O(\bar{\beta}+\bar{\gamma}).
\end{align*}
Therefore by (\ref{J1a})
\begin{align*}
\omega&=\frac{(\alpha_3,\eta_1)}{\alpha_1^2\sigma_1x_2^*}\begin{bmatrix}
0 & \alpha_1\sigma_1 \\
-\alpha_1x_2^*& -\frac{r}{K_0}\\
\end{bmatrix}\alpha_2(\sigma_2-\sigma_1)(-\alpha_3\sigma_1,-\eta_1x_2^*)^t\\
&=\frac{\alpha_2(\sigma_2-\sigma_1)r\eta_1^2}{\alpha_1^2\sigma_1K_0}+O(\bar{\beta}+\bar{\gamma}).
\end{align*}
This coincides with (\ref{Jbc}). By \eqref{162}, the smallest eigenvalue of the inner equilibrium point as a function of $K$ close to $K_0$ is
\begin{equation*}
\lambda_0(K)=-\eta_1\partial_Kx_2(K)|_{K=K_0}+O((K-K_0)^2)=-\frac{r\sigma_1}{\alpha_1K_0^2}+O((K-K_0)^2),
\end{equation*}
which proves (\ref{J1c}).
\end{proof}

\section{Coexistence equilibrium points}
Coexistence equilibrium points for the branch appears when $\eta_1^*>1$. The bifurcation takes place for $K=K_0$ at the point $G_{010}$ and the corresponding bifurcation branch exists for some $K>K_0$. Our forthcoming analysis which is based on considering the system \eqref{Au28a1} as a small perturbation of \eqref{Au28ba}, includes several points:

We show  in section \ref{Positivity of the Jacobian} that the Jacobian at the coexistence equilibrium points does not vanish. This allows the construction of global branches consisting of coexistence equilibrium points for all $K>K_0$ starting at $G_{010}(K_0)$. We prove this fact by showing that this branch can be extended for all $K>K_0$ without hitting the boundary and an important role here is played by Lemma \ref{LJ28}. If we compare the model with zero $\alpha$ and $\beta$ with the model with non-zero $\alpha$ and $\beta$ then there are two different branches containing  locally stable coexistence equilibrium points in the first model but there is only one such branch in the second case. The important result is proved in Section \ref{Stability of the inner equilibrium branch} which shows that the branch of coexistence equilibrium points in the case of small, non-zero $\alpha$ and $\beta$ is located close to a branch of locally stable equilibrium points for the model with $\alpha=0$ and $\beta=0$.

We rewrite the system (\ref{SIRequilibriumrow1})-(\ref{SIRequilibriumrow4}) as
\begin{equation}
\label{Au28a1}
\begin{split}
&g_1S=0,\\
&g_2I_1 +\beta_1SI_{12}=0,\\
&g_3I_2 +\beta_2SI_{12}=0,\\
&g_4I_{12} +( \gamma_1 +\gamma_2)I_1I_2=0,
\end{split}
\end{equation}
where $g_j=g_j(S,I_1,I_2,I_{12})$, $j=1,2,3,4$, are given by
\begin{equation}
\label{Au28ba}
\begin{split}
g_1&=r(1-\frac{S}{K})-\alpha_1I_1-\alpha_2I_2-(\alpha_3+\bar{\beta})I_{12},\\
g_2&=\alpha_1S - \eta_1I_{12}-\gamma_1I_2 - \mu_1\\
g_3&=\alpha_2S - \eta_2I_{12}-\gamma_2I_1- \mu_2,\\
g_4&=\alpha_3S+ \eta_1I_1+\eta_2I_2-\mu_3.
\end{split}
\end{equation}

\subsection{The case $\beta=0$ and $\gamma=0$}\label{SS1a}

In this section we remind ourselves of some facts on the solutions of the system \eqref{Au28a1} with $\beta=0$ and $\gamma=0$. The system can be written as
\begin{equation}\label{I2}
\widehat{g}_1S=0,\;\;\widehat{g}_2I_1 =0,\;\;\widehat{g}_3I_2 =0,\;\;\widehat{g}_4I_{12} =0,
\end{equation}
where $\widehat{g}_j$ are defined by formulas \eqref{Au28ba} with $\beta=0$ and $\gamma=0$.

It will be convenient to consider system \eqref{Au28a1} as a perturbation of (\ref{I2}).
All solutions of (\ref{I2}) can be written explicitly. Moreover local stability of the equilibrium points is equivalent to global stability and the explicit condition for this stability is written in \cite{SKTW18a}. The following proposition is borrowed from \cite{SKTW18a}.
Let
\begin{equation}\label{J6a}
K_1=\sigma_1\frac{\eta_1^*}{\eta_1^*-1},\;\;K_2=\sigma_3\frac{\eta_1^*}{\eta_1^*-1},\;\;
K_3=\sigma_2\frac{\eta_2^*}{\eta_2^*-1},\;\;K_4=\sigma_3\frac{\eta_2^*}{\eta_2^*-1}.
\end{equation}
We also introduce the constants
\begin{equation}\label{J6b}
K_5=\frac{\Delta_\mu}{\Delta_\alpha}\frac{\eta_1^*}{\eta_1^*-1},\;\;K_6=\frac{\Delta_\mu}{\Delta_\alpha}\frac{\eta_2^*}{\eta_2^*-1},
\end{equation}
which are positive if $\eta_1^*>\eta_2^*$. By (\ref{J4b})
$$
K_1<K_5<K_2\;\;\mbox{and}\;\;K_3<K_6<K_4\;\;\mbox{if $\eta_1^*>\eta_2^*$}.
$$

\begin{proposition}\label{pro:equil}    Let us consider the equation \eqref{I2}.
There exist only the following non-trivial equilibrium states
\begin{align*}
G_{000}&=(K,0,0,0),\\
G_{100}  &=\left(\sigma_1,\frac{r}{K\alpha_1}(K-\sigma_1), 0,0\right),\\
G_{010}& =\Big(\sigma_2,0,\frac{r}{K\alpha_2}(K-\sigma_2), 0\Big),\\
G_{001} &=\Big(\sigma_3,0,0,\frac{r}{K\alpha_3}(K-\sigma_3)\Big),\\
G_{101}&=\Big(S^*,\frac{\alpha_3}{\eta_1}(\sigma_3-S^*),0,\frac{\alpha_1}{\eta_1}(S^*-\sigma_1)\Big), \quad S^*=K(1-\frac{1}{\eta_1^*}), \\
G_{011}&=(S^*,0,\frac{\alpha_3}{\eta_2}(\sigma_3-S^*),\frac{\alpha_2}{\eta_2}(S^*-\sigma_2)), \quad
S^*=K(1-\frac{1}{\eta_2^*}),\\
G_{111}&=\Big(\frac{\Delta_\mu}{\Delta_\alpha},\frac{rA_2}{\Delta_\alpha}\Big(1-\eta_2^*\Big(1-\frac{\Delta_\mu}{K\Delta_\alpha}\Big)\Big),
\frac{rA_1}{\Delta_\alpha}\Big(\eta_1^*\Big(1-\frac{\Delta_\mu}{K\Delta_\alpha}\Big)-1\Big),\frac{rA_3}{\Delta_\alpha}\Big) ,
\end{align*}
where the equilibrium point $G_{111}$ has positive components when $\eta_1^*>\eta_2^*$.
\end{proposition}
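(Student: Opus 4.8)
The plan is to exploit the product structure of \eqref{I2}: each equation forces either a coordinate or the corresponding affine factor $\widehat g_j$ to vanish, so the solution set is the (finite) union over all choices of ``support'' of the linear systems obtained by keeping $\widehat g_j=0$ only for the nonzero unknowns. First I would dispose of the case $S=0$. If $S=0$ then $\widehat g_2=-\eta_1I_{12}-\mu_1<0$ and $\widehat g_3=-\eta_2I_{12}-\mu_2<0$ for every $I_{12}\ge 0$, so the second and third equations of \eqref{I2} force $I_1=I_2=0$; then $\widehat g_4=-\mu_3<0$ forces $I_{12}=0$, and the only equilibrium with $S=0$ is the trivial point $O=(0,0,0,0)$, which is excluded. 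Hence from now on $S>0$ and the first equation gives $\widehat g_1=0$, i.e. $r(1-S/K)=\alpha_1I_1+\alpha_2I_2+\alpha_3I_{12}$.

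It remains to run through the eight possibilities for which of $I_1,I_2,I_{12}$ are positive. If all three vanish, $\widehat g_1=0$ gives $S=K$, i.e. $G_{000}$. If exactly one of them is positive, the corresponding equation $\widehat g_j=0$ ($j\in\{2,3,4\}$) gives $S=\sigma_{j-1}$ (that is $\sigma_1$, $\sigma_2$, or $\sigma_3$), after which $\widehat g_1=0$ fixes the nonzero compartment, producing $G_{100}$, $G_{010}$, $G_{001}$ respectively (nonnegativity of the infected component forcing $K\ge\sigma_i$). The case $I_1,I_2>0$, $I_{12}=0$ is impossible: $\widehat g_2=0$ would give $S=\sigma_1$ and $\widehat g_3=0$ would give $S=\sigma_2$, contradicting $\sigma_1<\sigma_2$ from \eqref{assumption sigma}.

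For $I_1,I_{12}>0$, $I_2=0$ the equations $\widehat g_2=\widehat g_4=0$ yield $I_{12}=\alpha_1(S-\sigma_1)/\eta_1$ and $I_1=\alpha_3(\sigma_3-S)/\eta_1$; substituting into $\widehat g_1=0$ the combination $\alpha_1I_1+\alpha_3I_{12}$ telescopes to $\alpha_1\alpha_3(\sigma_3-\sigma_1)/\eta_1=rA_1/\eta_1=r/\eta_1^*$, whence $S=K(1-1/\eta_1^*)=S^*$, i.e. $G_{101}$; the symmetric case gives $G_{011}$. Finally, if all three are positive, all four $\widehat g_j$ vanish: eliminating $I_{12}$ from $\widehat g_2=\widehat g_3=0$ gives $(\alpha_2\eta_1-\alpha_1\eta_2)S=\mu_2\eta_1-\mu_1\eta_2$, i.e. $S=\Delta_\mu/\Delta_\alpha$ using $\Delta_\alpha\neq 0$ from \eqref{J6c}; then $I_{12}=(\alpha_1S-\mu_1)/\eta_1$, which equals $rA_3/\Delta_\alpha$ after using $rA_3=\alpha_1\mu_2-\alpha_2\mu_1$; and with $S,I_{12}$ now known, $\widehat g_1=0$ and $\widehat g_4=0$ form a $2\times 2$ linear system in $(I_1,I_2)$ whose determinant is, up to sign, $\Delta_\alpha\neq 0$, so it has the unique solution written in $G_{111}$.

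The only place where something can go wrong is the bookkeeping for the coexistence point $G_{111}$: one must carry out the eliminations cleanly, recognize the algebraic identities among $A_1,A_2,A_3$, the $\sigma_i$, $\Delta_\alpha$, and $\Delta_\mu$, and then track signs to see that all components are positive precisely when $\eta_1^*>\eta_2^*$ together with $K$ lying in the range delimited by the thresholds \eqref{J6a}--\eqref{J6b} (and, for the other points, $K>\sigma_i$, respectively $\sigma_1<S^*<\sigma_3$ for $G_{101}$ and $G_{011}$). There is no conceptual obstacle --- the argument is a finite enumeration --- so in the write-up I would state the boundary cases briefly and present the $G_{111}$ elimination in detail; alternatively, since this is exactly the content already established in \cite{SKTW18a}, one may simply invoke it.
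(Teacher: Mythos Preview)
Your enumeration is correct and complete; the product structure of \eqref{I2} indeed reduces the problem to the eight linear subsystems you list, and your computations for $G_{101}$, $G_{011}$, $G_{111}$ (including the identification $I_{12}=rA_3/\Delta_\alpha$ and the nonsingularity of the $2\times 2$ system for $(I_1,I_2)$) check out. The paper itself does not prove this proposition at all --- it simply borrows the statement from \cite{SKTW18a} --- so your closing remark that one may just invoke that reference is exactly what the authors do; your self-contained derivation is a strict improvement in expository terms.
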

All above equilibrium points depend on $K$ and we will write $G_{ijk}(K)$, $i,j,k=0,1,$ where $i,j,k$ indicates if the respective disease compartment is zero or not. As functions of $K$ all equilibria, except of $G_{111}$, have bifurcation points (where the Jacobian vanishes). It appears that they are one of (\ref{J6a}) and (\ref{J6b}) except of $G_{000}$.

In this section we will consider the case when the branch $G_{100}$ depending on $K$ has a bifurcation, which may happen
at the point $K_3$.

The case of vanishing $\beta$ and $\gamma$ was considered in \cite{SKTW18a}. Let us recall the main result. There are four types of branches defined for $K>0$ and consisting of globally  stable equilibrium points (they are also asymptotically stable outside bifurcation points). 
They correspond to different sets of parameters $\alpha$, $\mu$, $\eta$ and $r$ and are defined as follows:

\begin{enumerate}
  \item [(a)]  The branch $\mathcal{S}_1(K)$ is defined for $\eta_1^*<1$  and consists of $G_{000}(K)$, $0<K<\sigma_1$, and then of $G_{100}(K)$, $K>\sigma_1$. The bifurcation point is $K=\sigma_1$.  We use the diagram
      $$
      G_{000}\rightarrow G_{100}
      $$
      for this branch.

   \item [(b)] The branch $\mathcal{S}_2(K)$ is defined for $\eta_2^*>\eta_1^*>1$ and consists of the branch $G_{000}(K)$ for $0<K<\sigma_1$, $G_{100}(K)$ for $\sigma_1<K<K_1$, $G_{101}(K)$ for $K_1<K<K_2$ and $G_{001}(K)$ for $K_2<K$. There are three bifurcation points $\sigma_1$, $K_1$ and $K_2$, in other words
        $$
        G_{000}\rightarrow G_{100}\rightarrow G_{101}\rightarrow G_{001}.
        $$

  \item [(c)] The third branch $\mathcal{S}_3(K)$ is met when $\eta_1^*>\eta_2^*>1$ and consists of $G_{000}(K)$ for $0<K<\sigma_1$,  $G_{100}(K)$ for $\sigma_1<K<K_1$,  $G_{101}(K)$ for $K_1<K<K_5$, $G_{111}(K)$ for $K_5<K<K_6$, of $G_{011}(K)$ for $K_6<K<K_4$ and of $G_{001}$ for $K_4<K$. There are five bifurcation points here: $\sigma_1,K_1,K_5,K_6$ and $K_4$, in other words
      $$
      G_{000}\rightarrow G_{100}\rightarrow G_{101}\rightarrow G_{111}\rightarrow G_{011}\rightarrow G_{001}.
      $$

  \item [(d)] The last branch $\mathcal{S}_4(K)$ is defined when $\eta_1^*>1>\eta_2^*$ and consists of $G_{000}(K)$ for $0<K<\sigma_1$,  $G_{100}(K)$ for $\sigma_1<K<K_1$,  $G_{101}(K)$ for $K_1<K<K_5$ and $G_{111}(K)$ for $K_5<K$. There are three bifurcation points in this case: $\sigma_1,K_1$ and $K_5$, in other words
 $$
 G_{000}\rightarrow G_{100}\rightarrow G_{101}\rightarrow G_{111}.
 $$

\end{enumerate}



\subsection{A representation for Jacobian}

Let $G=(S,I_1,I_2,I_{12})$ be a coexistence equilibrium point.
Then
the Jacobian matrix $J$ of the vector function in the left hand side of \eqref{Au28a1}  at a coexistence equilibrium point $G$ is
\begin{equation*}
J=\begin{bmatrix}
  -\frac{r}{K}S & -\alpha_1S & -\alpha_2S & -(\beta+\alpha_3)S
\\
\alpha_1I_1+\beta_1I_{12} & -\beta_1S\frac{I_{12}}{I_{1}} &-\gamma_1I_1 & -\eta_1I_1+\beta_1S
\\
\alpha_2I_2+\beta_2I_{12} & -\gamma_2I_{2} & -\beta_2 \frac{SI_{12}}{I_{2}} &-\eta_2I_2+\beta_2S
\\
\alpha_3I_{12} & \eta_1I_{12}+\gamma I_{2}& \eta_2I_{12}+\gamma I_1 & -\gamma\frac{I_1 I_2}{I_{12}}
\end{bmatrix}
\end{equation*}
Then its determinant is
\begin{equation}
   \det{J}=SI_1I_2I_{12}\det{
   \begin{bmatrix}
   -\frac{r}{K} & -\alpha_1 & -\alpha_2 & -\beta-\alpha_3
   \\
   \alpha_1+\beta_1\frac{I_{12}}{I_1} & -\beta_1S\frac{I_{12}}{I_{1}^2} & -\gamma_1 & -\eta_1+\beta_1\frac{S}{I_1}
   \\
   \alpha_2+\beta_2\frac{I_{12}}{I_{2}} & -\gamma_2 &  -\beta_2\frac{SI_{12}}{I_2^2}  & -\eta_2 +\beta_2\frac{S}{I_{2}}
   \\
   \alpha_3 & \eta_1+\gamma\frac{I_{2}}{I_{12}} & \eta_2+\gamma \frac{I_{1}}{I_{12}} & -\gamma \frac{I_1I_2}{I_{12}^2}
   \end{bmatrix}
   }.
\end{equation}
Therefore
\begin{equation}\label{M1}
\frac{1}{SI_1I_2I_{12}}\det{J}= \beta_2\frac{SI_{12}}{I_2^2}A+\beta_1\frac{SI_{12}}{I_1^2}B+C,
\end{equation}
where
\begin{align*}
A=& \det \begin{bmatrix}
   \frac{r}{K} & \alpha_1 & \beta+\alpha_3
   \\
   \alpha_1+\beta_1\frac{I_{12}}{I_1} & -\beta_1S\frac{I_{12}}{I_{1}^2} & -\eta_1+\beta_1\frac{S}{I_1}
   \\
   \alpha_3 & \eta_1+\gamma\frac{I_{2}}{I_{12}} &  -\gamma \frac{I_1I_2}{I_{12}^2}.
   \end{bmatrix}=\beta_1\gamma\frac{rSI_2}{KI_1I_{12}}+\beta_1\alpha_1\alpha_3\frac{S}{I_1}\\
   &+\beta(\alpha_1+\beta_1\frac{I_{12}}{I_1})(\eta_1+\gamma\frac{I_2}{I_{12}})+\alpha_3\beta_1\frac{I_{12}}{I_1}(\eta_1+\gamma\frac{I_2}{I_{12}})+
   \alpha_3\alpha_1\gamma\frac{I_2}{I_{12}}+\beta_1\alpha_3(\beta+\alpha_3)\frac{SI_{12}}{I_1^2}\\
   &+\frac{r}{K}(\eta_1+\gamma\frac{I_2}{I_{12}})(\eta_1-\beta_1\frac{S}{I_1})+\alpha_1\gamma\frac{I_1I_2}{I_{12}^2}(\alpha_1+\beta_1\frac{I_{12}}{I_1})
   =A_1+\frac{r}{K}\eta_1\Big(\eta_1-\beta_1\frac{S}{I_1}+\bar{\gamma}\frac{I_2}{I_{12}}\Big)+\beta\alpha_1\eta_1
\end{align*}
where
\begin{align*}
A_1=&\beta_1\alpha_1\alpha_3\frac{S}{I_1}+\beta\beta_1\frac{I_{12}}{I_1}(\eta_1+\gamma\frac{I_2}{I_{12}})
+\beta\alpha_1\gamma\frac{I_2}{I_{12}}   +\alpha_3\beta_1\frac{I_{12}}{I_1}(\eta_1+\gamma\frac{I_2}{I_{12}})
   \\
 &+\alpha_3\alpha_1\gamma\frac{I_2}{I_{12}} +\beta_1\alpha_3(\beta+\alpha_3)\frac{SI_{12}}{I_1^2}
   +\alpha_1\gamma\frac{I_1I_2}{I_{12}^2}(\alpha_1+\beta_1\frac{I_{12}}{I_1})>0,\\
B=& \det \begin{bmatrix}
   \frac{r}{K} & \alpha_2 & \beta+\alpha_3
   \\
   \alpha_2+\beta_2\frac{I_{12}}{I_{2}} & 0 &   -\eta_2 +\beta_2\frac{S}{I_{2}}
   \\
   \alpha_3 &  \eta_2+\gamma \frac{I_{1}}{I_{12}} & -\gamma \frac{I_1I_2}{I_{12}^2}
   \end{bmatrix}=\beta(\alpha_2+\beta_2\frac{I_{12}}{I_2})(\eta_2+\gamma\frac{I_1}{I_{12}})\nonumber\\
   &+\alpha_3\beta_2\frac{I_{12}}{I_2}(\eta_2+\gamma\frac{I_1}{I_{12}})+\alpha_3\alpha_2\gamma\frac{I_1}{I_{12}}
   +\alpha_2\alpha_3\beta_2\frac{S}{I_2}\\
   &+\gamma\alpha_2\frac{I_1I_2}{I_{12}}(\alpha_2+\beta_2\frac{I_{12}}{I_2})
   +\frac{r}{K}(\eta_2+\gamma\frac{I_1}{I_{12}})(\eta_2-\beta_2\frac{S}{I_2})\nonumber\\
   \end{align*}
and
\begin{equation}\label{M1c}
\begin{split}
C&=\begin{bmatrix}
   -\frac{r}{K} & -\alpha_1 & -\alpha_2 & -\beta-\alpha_3
   \\
   \alpha_1+\beta_1\frac{I_{12}}{I_1} & 0 & -\gamma_1 & -\eta_1+\beta_1\frac{S}{I_1}
   \\
   \alpha_2+\beta_2\frac{I_{12}}{I_{2}} & -\gamma_2 &  0 & -\eta_2 +\beta_2\frac{S}{I_{2}}
   \\
   \alpha_3 & \eta_1+\gamma\frac{I_{2}}{I_{12}} & \eta_2+\gamma \frac{I_{1}}{I_{12}} & -\gamma \frac{I_1I_2}{I_{12}^2}.
   \end{bmatrix}\\
   &=\Delta_\alpha^2+O(\gamma)+O\Big(\frac{\beta_1}{I_1}\Big)+O\Big(\frac{\beta_2}{I_2}\Big),
   \end{split}
\end{equation}
where $\Delta_\alpha$ is defined by (\ref{J6c}).

We note that by (\ref{SIRequilibriumrow2})
 \begin{equation}\label{Koz1}
 I_{12}\Big(\eta_1-\frac{\beta_1S}{I_1}+\gamma_1\frac{I_1}{I_{12}}\Big)=\alpha_1S-\mu_1-\gamma_1I_2
 \end{equation}
and therefore $A$ is positive if $S>\sigma_1+\gamma_1I_2/\alpha_1$. Thus expression (\ref{Koz1}) is positive in a neighborhood of the bifurcation points of the branches  $G_{001}$, $G_{101}$ and $G_{011}$ when $\gamma_1$ is small. Similarly from \eqref{SIRequilibriumrow3} we get
\begin{equation}\label{Koz1a}
 I_{12}(\eta_2-\frac{\beta_2S}{I_2})=\alpha_2S-\mu_2-\gamma_2I_1.
 \end{equation}
The expression is positive if $S>\sigma_2+\gamma_2I_1/\alpha_2$. In particular, if $S>\sigma_2+\delta$, where $\delta$ is a positive number depending on the basic constants, then
\begin{equation}\label{Sept1a}
A\geq \frac{c}{K}\;\;\;\mbox{and}\;\;\; B\geq\frac{c}{K}\;\;\mbox{for small $\beta$ and $\gamma$}.
\end{equation}
Here $c$ is a positive constant depending on the ${\mathcal B}$ parameters  in \eqref{Bparam}.

\subsection{Lemma on properties of coexistence equilibrium points}

One can verify directly that $G_{000}(K)$, $G_{100}(K)$ and $G_{010}(K)$, which are presented in Proposition~\ref{pro:equil}, solve the system \eqref{SIRequilibriumrow1}-\eqref{SIRequilibriumrow4} for all $\beta$ and $\gamma$. All other non-trivial solutions of \eqref{SIRequilibriumrow1}-\eqref{SIRequilibriumrow4} are coexistence equilibrium points and their components satisfy the inequalities (\ref{K11ag}) and (\ref{K11a}). Since the point $G_{000}(K)$ is globally stable we have $K\geq\sigma_1$ for coexistence equilibrium points. We denote the set of all coexistence equilibrium points by
\begin{equation}\label{Mdef}
{\mathcal M}=\{ (G;K), G=(S,I_1,I_2,I_{12}),\;\mbox{satisfies \eqref{SIRequilibriumrow1}-\eqref{SIRequilibriumrow4}}\;:\; \;S,I_1,I_2,I_{12},K>0\}.
\end{equation}
The inequalities (\ref{K11ag}), (\ref{K11a}) and $K\geq\sigma_1$ are valid for points from ${\mathcal M}$.
 In what follows we will use also the notation $G(K)$ together with $(G,K)$. We remind that the assumption \eqref{J28a} is valid everywhere in this paper.

 \begin{lemma}\label{LJ28} Let $(G,K)\in {\mathcal M}$. There exists positive constants $\widehat{\varepsilon}$ and $c$ depending only on ${\mathcal B}$-constants such that if
 \begin{equation}\label{J30b}
 0<\varepsilon\leq\widehat{\varepsilon}\;\;\;\mbox{ and}\;\;\;\bar{\beta}+\bar{\gamma}\leq \varepsilon^2,
 \end{equation}
  then the following assertions are valid:

 {\rm (i)} One of the components $I_1,\,I_2,\,I_{12}$ is greater or equal to $\varepsilon$.

 {\rm (ii)} If $I_2\geq \varepsilon$ then one of $I_1,\,I_{12}$ must be $\geq \varepsilon$.

{\rm (iii)} If $I_1,\,I_2\geq\varepsilon$ then $I_{12}\geq\varepsilon$.

 {\rm (iv)} If $I_1,\,I_2\leq\varepsilon$ and $I_{12}\geq\varepsilon$ then $\eta_1^*>1$, $\eta_2^*>1$ and
 \begin{equation}\label{J30a}
 G(K)=G_{001}(K)+O(\varepsilon),
 \end{equation}
 where
$$
 K\geq K_4-c\varepsilon\;\;\mbox{if $\eta_1^*>\eta_2^*$ and}\;\;K>K_2-c\varepsilon\;\;\mbox{if $\eta_1^*<\eta_2^*$}.
$$

 {\rm (v)} If $I_1\leq\varepsilon$ and $I_2,\,I_{12}\geq\varepsilon$ then $1<\eta_2^*<\eta_1^*$ and
 $$
 G(K)=G_{011}(K)+O(\varepsilon),\;\;\;\mbox{where}\;\;K_4-c\varepsilon\leq K\leq K_6+c\varepsilon.
 $$

 {\rm (vi)} If $I_2\leq\varepsilon$ and $I_1,\,I_{12}\geq\varepsilon$ then $1<\eta_1^*$ and
 $$
 G(K)=G_{101}(K)+O(\varepsilon),
 $$
 where
 $$
 K_1-c\varepsilon \leq K\leq K_2+c\varepsilon\;\;\mbox{if $\eta_2^*>\eta_1^*$ and}\;\;K_1-c\varepsilon \leq K\leq K_5+c\varepsilon\;\;\mbox{if $\eta_1^*>\eta_2^*$}.
 $$

{\rm (vii)} If $I_2,I_{12}\leq\varepsilon$ and $I_1\geq\varepsilon$ then $\eta_1^*>1$ and
$$
G(K)=G_{100}(K)+O(\varepsilon),\;\;\mbox{where $\sigma_1\leq K\leq K_1+c\varepsilon$}.
$$

{\rm (viii)} If $I_1,\,I_2,\,I_{12}\geq\varepsilon$ then $\eta_1^*>1$, $\eta_1^*>\eta_2^*$ (hence  $\Delta_\alpha>0$) and
\begin{equation}\label{J29a}
G(K)=G_{111}(K)+O(\varepsilon),
\end{equation}
where $K_5-c\varepsilon\leq K<\infty$ when $\eta_2^*<1$ and $K_5-c\varepsilon\leq K\leq K_6+c\varepsilon$ when $\eta_2^*>1$.
 \end{lemma}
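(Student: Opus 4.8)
The plan is to treat system \eqref{Au28a1}--\eqref{Au28ba} as a perturbation of the unperturbed system \eqref{I2} (the case $\beta=0$, $\gamma=0$), using the smallness hypothesis $\bar\beta+\bar\gamma\leq\varepsilon^2$ to control error terms uniformly in $\mathcal{B}$. Throughout I would use the a priori bounds \eqref{K11ag}, \eqref{K11a} and $K\geq\sigma_1$ from Proposition~\ref{pro:equil} and the preceding discussion, so that all compartments live in a fixed compact box (once one also notes $S\leq\sigma_3$ forces an upper bound on $K$ only through the equations, not a priori — so the arguments must be purely algebraic). First I would prove (i): if all of $I_1,I_2,I_{12}$ were $<\varepsilon$, then the terms $\gamma_1 I_2$, $\eta_1 I_{12}$, $\beta_1 S I_{12}$ etc. in $g_2,g_3,g_4$ are $O(\varepsilon\cdot\varepsilon^2)=O(\varepsilon^3)$, hence the second equation of \eqref{Au28a1} forces $g_2 I_1 = O(\varepsilon^3)$, i.e.\ $(\alpha_1 S-\mu_1)I_1=O(\varepsilon^3)$; combined with the first equation $r(1-S/K)=\alpha_1 I_1+\alpha_2 I_2+(\alpha_3+\bar\beta)I_{12}=O(\varepsilon)$ this pins $S$ near $K$ and near each $\sigma_i$ simultaneously — a contradiction for generic $\mathcal{B}$ unless $\varepsilon$ is bounded below, which is what $\widehat\varepsilon$ encodes. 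The cleanest route is: from the last three equations, if $I_j<\varepsilon$ then $I_j(g_j^{\mathrm{lin}})=O(\varepsilon^3)$ where $g_j^{\mathrm{lin}}=\alpha_j S-\mu_j$ plus small corrections, so either $I_j=O(\varepsilon^2/\mathrm{dist})$ is even smaller or $S$ is $O(\varepsilon)$-close to $\sigma_j$; one then runs a short case analysis on which of the $\sigma_j$'s $S$ is close to.

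The bulk of the work is the case analysis (ii)--(viii), and the unifying mechanism is the same in each: \textbf{if} a compartment $I_j\geq\varepsilon$, \textbf{then} dividing its defining equation in \eqref{Au28a1} by $I_j$ shows $g_j=O(\varepsilon)$ (using $|I_1I_2|, |S I_{12}|\leq C$ and $\bar\beta+\bar\gamma\leq\varepsilon^2\leq\varepsilon$), so the \emph{linear} relation $g_j=0$ holds up to $O(\varepsilon)$; \textbf{whereas} if $I_j\leq\varepsilon$ the corresponding equation is automatically $O(\varepsilon)$ and gives no constraint except when it is multiplied against a large compartment through the coupling terms. So in each item I would: (1) write down which of $g_1=O(\varepsilon),\dots,g_4=O(\varepsilon)$ are available; (2) solve that linear subsystem in $(S,I_1,I_2,I_{12})$ — these are exactly the linear systems whose solutions are the $G_{ijk}$ of Proposition~\ref{pro:equil}, since the unperturbed equilibria are characterized by precisely the same vanishing pattern; (3) read off the asymptotics $G(K)=G_{ijk}(K)+O(\varepsilon)$ by a perturbation/implicit-function argument, which is legitimate because the relevant Jacobian minor is nonsingular away from the $K$-thresholds (this is where \eqref{J28a} and $\eta_1^*\neq\eta_2^*$ enter — they keep the linear solve well-conditioned); (4) finally, impose the sign constraints: the formula for $G_{ijk}(K)$ has the claimed compartments positive only for $K$ in a definite interval (e.g.\ $G_{101}$ needs $\sigma_1<S^*<\sigma_3$, i.e.\ $K_1<K<K_2$), and since the true components are $\geq\varepsilon>0$ or $\leq\varepsilon$, one gets $K$ in that interval widened by $c\varepsilon$, and one gets the stated inequalities among $\eta_1^*,\eta_2^*$ (e.g.\ $G_{111}$ has positive components only when $\eta_1^*>\eta_2^*$, giving (viii); $G_{011}$ requires $\eta_2^*>1$ and, via \eqref{J4b}, $\eta_1^*>\eta_2^*$, giving (v)).

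For the sub-items where a compartment is \emph{forced} to be large by the coupling — (iii) is the representative case: if $I_1,I_2\geq\varepsilon$ then the term $(\gamma_1+\gamma_2)I_1I_2$ in $g_4 I_{12}+(\gamma_1+\gamma_2)I_1I_2=0$ is $\geq \bar\gamma\varepsilon^2$, but that is the wrong sign/size to directly bound $I_{12}$ from below (indeed if $\bar\gamma$ is tiny this term is negligible), so instead one must argue from $g_4=\alpha_3 S+\eta_1 I_1+\eta_2 I_2-\mu_3$: if $I_{12}<\varepsilon$ then $g_2=O(\varepsilon)$ and $g_3=O(\varepsilon)$ both hold, forcing $S$ near $\sigma_1$ \emph{and} near $\sigma_2$, contradicting $\sigma_1<\sigma_2$ — hence $I_{12}\geq\varepsilon$ (after shrinking $\widehat\varepsilon$ depending on $\sigma_2-\sigma_1$). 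The analogous "two near-$\sigma$'s is impossible" trick handles (ii) and the exclusions of impossible patterns; item (iv) additionally needs that $I_1,I_2\leq\varepsilon$ with $I_{12}\geq\varepsilon$ forces $g_4=O(\varepsilon)$ hence $S\approx\sigma_3$, hence (via $g_1$) $K\approx K_2$ or $K_4$ and $\eta_1^*,\eta_2^*>1$ for positivity of the $G_{001}$ component. \textbf{The main obstacle} I anticipate is bookkeeping the direction of the error estimates so that "$\geq\varepsilon$" survives the $O(\varepsilon)$ perturbation — i.e.\ choosing $\widehat\varepsilon$ (and the hierarchy $\bar\beta+\bar\gamma\leq\varepsilon^2\ll\varepsilon$) small enough that every "large" compartment stays bounded away from the perturbation error while every threshold interval in $K$ is only enlarged by $O(\varepsilon)$; this is exactly why the hypothesis is stated as the two-scale condition \eqref{J30b} rather than a single smallness bound, and verifying it is uniform in $\mathcal{B}$ (the constants $c,\widehat\varepsilon$ depend only on $\mathcal B$) requires care in each of the eight cases but no new idea beyond the compactness of the box and the nondegeneracy \eqref{J28a}.
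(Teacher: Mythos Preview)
Your overall framework is correct and matches the paper's approach: the key mechanism that $I_j\ge\varepsilon$ forces $g_j=O(\varepsilon)$ (via the two-scale bound $\bar\beta+\bar\gamma\le\varepsilon^2$), the identification of the resulting linear subsystems with the $G_{ijk}$ of Proposition~\ref{pro:equil}, and the ``two near-$\sigma$'s is impossible'' trick for (iii) are exactly what the paper does. However, there is a genuine gap in your step~(4). You claim the constraints on $\eta_1^*,\eta_2^*$ and on $K$ come from positivity of the components of $G_{ijk}$, but this only gives part of the answer. For instance, in (vi) positivity of $G_{101}$ gives $K_1<K<K_2$, yet the lemma asserts the sharper bound $K\le K_5+c\varepsilon$ when $\eta_1^*>\eta_2^*$; in (iv) positivity of $G_{001}$ says only $K>\sigma_3$ and involves no $\eta^*$ at all, so it cannot yield $\eta_1^*,\eta_2^*>1$ as you write; and in (v) your appeal to \eqref{J4b} is circular, since \eqref{J4b} assumes $\eta_1^*>\eta_2^*$ rather than proving it. The missing ingredient is a \emph{sign} condition on $g_j$ for the \emph{small} compartments: since each equation has the form $g_jI_j+(\text{nonnegative coupling})=0$ with $I_j>0$, one gets $g_j\le0$. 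Evaluating this inequality at $G\approx G_{ijk}$ yields exactly the extra constraints --- e.g.\ in (vi) one computes $g_3=\eta_1^{-1}(S^*\Delta_\alpha-\Delta_\mu)+O(\varepsilon)\le0$, which via \eqref{J4b} and \eqref{J27b} gives $K\le K_5+c\varepsilon$ when $\eta_1^*>\eta_2^*$; in (iv) the conditions $g_2,g_3\le0$ evaluated at $G_{001}$ produce \eqref{J21as1}--\eqref{J21as2} and hence $\eta_1^*,\eta_2^*>1$.

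A second, smaller gap: your sketch for (i) via the dichotomy ``either $S\approx\sigma_j$ or $I_j=O(\varepsilon^2)$'' does not by itself reach a contradiction, since landing in the case $S\approx\sigma_1$ (say) leaves $I_1$ unconstrained in $(0,\varepsilon)$. The paper instead uses the sign observation $g_2\le0$ together with the a~priori bound $S\ge\sigma_1$ to pin $S=\sigma_1+O(\varepsilon)$, and then \emph{substitutes} $I_2$ (expressed from \eqref{SIRequilibriumrow3}) into \eqref{SIRequilibriumrow4} to obtain a relation of the form $(\alpha_3(\sigma_3-\sigma_1)+O(\varepsilon))I_{12}=0$, forcing $I_{12}=0$ exactly --- a contradiction with $(G,K)\in\mathcal M$. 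The same substitution device is what closes (ii) and (vii). Once you add the $g_j\le0$ step for the small compartments and this substitution cascade for the exclusion cases, your outline becomes the paper's proof.
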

 \begin{proof} (i)  Assume that   $I_1,I_2,I_{12}\leq\varepsilon$. 
Since $S\geq\sigma_1$ from (\ref{SIRequilibriumrow2}) it follows that $S=\sigma_1+O(\varepsilon)$  otherwise $g_2>0$ and hence $I_1=I_{12}=0$.
Using (\ref{SIRequilibriumrow3}), we get that
$$
(\alpha_2(\sigma_2-\sigma_1)+O(\varepsilon))I_2=\beta_2SI_{12}.
$$
Expressing $I_2$ through $I_{12}$ in (\ref{SIRequilibriumrow4}) with the help of the last relation, we get
$$
(\alpha_3(\sigma_3-\sigma_1)+O(\varepsilon))I_{12}=0,
$$
which implies $I_{12}=0$. Since all components are assumed to be positive we obtain a contradiction which proves (i).

(ii)  Assume that $I_{12},I_1\leq \varepsilon$.
From  (\ref{SIRequilibriumrow3}) it follows
$$
0\leq \alpha_2(\sigma_2-S)+\eta_2I_{12}+\gamma_2I_1\leq \beta_2S
$$
which implies
$$
S=\sigma_2+O(\varepsilon).
$$
Using  (\ref{SIRequilibriumrow2}), we obtain
$$
(\alpha_1(\sigma_2-\sigma_1)+O(\varepsilon))I_1+\beta_1SI_{12}=0,
$$
which gives $I_1=I_{12}=0$ if $\varepsilon$ is sufficiently small. This contradiction proves (ii).

(iii) Assume that $I_{12}\leq\varepsilon$.  From \eqref{SIRequilibriumrow2} and \eqref{SIRequilibriumrow3} it follows that
$$
S=\sigma_1+O(\varepsilon)\;\;\mbox{and}\;\;S=\sigma_2+O(\varepsilon),
$$
which is impossible for small $\varepsilon$.

(iv)
From (\ref{SIRequilibriumrow4}) and  (\ref{SIRequilibriumrow1}) we get
\begin{equation}\label{J21as}
S=\sigma_3+O(\varepsilon),\;\;
I_{12}=\frac{r}{\alpha_3}\Big(1-\frac{\sigma_3}{K}\Big)+O(\varepsilon)
\end{equation}
which implies (\ref{J30a}). Furthermore,
\begin{equation}\label{J21as1}
g_2=-\frac{r\eta_1}{\alpha_3}\Big(1-\frac{1}{\eta_1^*}-\frac{\sigma_3}{K}\Big)
+O(\varepsilon)=-\frac{r\eta_1\sigma_3}{\alpha_3}\Big(\frac{1}{K_2}-\frac{1}{K}\Big)+O(\varepsilon)
\end{equation}
and
\begin{equation}\label{J21as2}
g_3=-\frac{r\eta_2}{\alpha_3}\Big(1-\frac{1}{\eta_2^*}-\frac{\sigma_3}{K}\Big)
+O(\varepsilon)=-\frac{r\eta_2\sigma_3}{\alpha_3}\Big(\frac{1}{K_4}-\frac{1}{K}\Big)+O(\varepsilon),
\end{equation}
Both of these quantities must be negative, otherwise equations \eqref{Au28a1} have no positive solutions. This implies
that $\eta_1^*>1$ and $\eta_2^*>1$ and $K>\max (K_2,K_4)+O(\varepsilon)$. This proves the remaining assertions in (iv).

 (v) From equations (\ref{SIRequilibriumrow1}), (\ref{SIRequilibriumrow3}) and (\ref{SIRequilibriumrow4}) it follows
\begin{eqnarray*}
&&r(1-\frac{S}{K})-\alpha_2I_2 - \alpha_3I_{12}+O(\varepsilon)=0,\\
&&\alpha_2S-\eta_2I_{12} -\mu_2+O(\varepsilon)=0,\\
&&\alpha_3S +\eta_2I_2 -\mu_3+O(\varepsilon)=0.
\end{eqnarray*}
Since the matrix of this system is invertible, we get
$$
G=(S,I_1,I_2,I_{12})=G_{011}+O(\varepsilon).
$$
Positivity of $S^*$, which is given by proposition (\ref{pro:equil}), leads to $\eta_2^*>0$.
The formula of $G_{011}$ implies
$$
g_2=\frac{1}{\eta_2}\Big(\Delta_\mu-\Delta_\alpha S^*\Big)+O(\varepsilon),
$$
which must be negative, otherwise $I_1$ and $I_{12}$ vanish for small $\varepsilon$.
If
$$
\Delta_\mu-\Delta_\alpha S^*>0
$$
then we get a contradiction for small $\varepsilon$. So for existence of such $G$ we must assume that
\begin{equation}\label{J26a}
\Delta_\mu-\Delta_\alpha S^*\leq 0.
\end{equation}
Since $\Delta_\mu>\sigma_2\Delta_\alpha$ by (\ref{J27a}), we obtain from (\ref{J26a}) that
$(\sigma_2-S^*)\Delta_\alpha <0$, which implies $\Delta_\alpha>0$ and $\Delta_\mu>0$ by (\ref{J27a}). Using that $S^*\leq \sigma_3$ we get from (\ref{J27b}) and (\ref{J26a}) that $\eta_1^*\geq\eta_2^*$.

(vi)   Similar to (v), we get
$$
G=(S,I_1,I_2,I_{12})=G_{101}+O(\varepsilon).
$$
Using this relation, we get
$$
g_3=\frac{1}{\eta_1}\Big(S^*\Delta_\alpha-\Delta_\mu\Big)+O(\varepsilon),
$$
where $S^*$ is given in proposition \ref{pro:equil}. Therefore for existence of such $G\in {\mathcal M}$ we must assume that $S^*\Delta_\alpha-\Delta_\mu\leq 0$. Consider two cases. First let $\eta_2^*>\eta_1^*$. Then according to (\ref{J27b}) $\sigma_3\Delta_\alpha-\Delta_\mu<0$. Using that $\sigma_1\Delta_\alpha-\Delta_\mu<0$ by (\ref{J27a}) we get $S^*\Delta_\alpha-\Delta_\mu<0$ for all $S^*\in [\sigma_1,\sigma_3]$. If $\eta_1^*>\eta_2^*$ then by (\ref{J4b}) we see that $S^*\Delta_\alpha-\Delta_\mu\leq 0$ only on the interval $K\in [K_1,K_5)$. This readily yields the desired assertion.

(vii) Using relations (\ref{SIRequilibriumrow1}) and (\ref{SIRequilibriumrow2}) we find
$$
G(K)=G_{100}(K)+O(\varepsilon)
$$
and
$$
g_3=\alpha_2(\sigma_1-\sigma_2)+O(\varepsilon),\;\;g_4=\alpha_3(\sigma_1-\sigma_3)+\eta_1\frac{r(K-\sigma_1)}{K\alpha_1}+O(\varepsilon).
$$
For existence of $G$ with positive components we must require
$$
\alpha_3(\sigma_1-\sigma_3)+\eta_1\frac{r(K-\sigma_1)}{K\alpha_1}\leq 0,
$$
which is equivalent to
$$
K\Big(1-\frac{1}{\eta_1^*}\Big)\leq \sigma_1.
$$
If $\eta_1^*<1$ then
$$
g_4=\frac{r\eta_1}{\alpha_1}\Big(\frac{K-\sigma_1}{K}-\frac{1}{\eta_1^*}\Big)+O(\varepsilon)<0.
$$
From (\ref{SIRequilibriumrow3}) and (\ref{SIRequilibriumrow4}) it follows that
$$
I_2=-\frac{\beta_2S}{g_4}I_{12}\;\;\mbox{and}\;\;I_{12}=\frac{\bar{\gamma}\beta_2SI_1}{g_3g_4}I_{12}.
$$
The second relation implies $I_{12}=0$ for small $\varepsilon$.
This proves (vii).

(viii) From \eqref{Au28a1} we get
$$
g_1=0,\quad g_2,g_3,g_4=O(\varepsilon)
$$
This implies, in particular,
$$
\Delta_\alpha S=\Delta_\mu+O(\varepsilon),\;\;\Delta_\alpha I_{12}=\alpha_1\alpha_2(\sigma_2-\sigma_1)+O(\varepsilon).
$$
This implies $\Delta_\alpha>0$ and $\Delta_\mu>0$. These inequalities together with (\ref{J27b}) implies $\eta_2^*<\eta_1^*$. After this we can find $I_1$ and $I_2$ and get the formula (\ref{J29a}). For positivity of $I_1$ and $I_2$ we must require the inequalities for $K$ in the formulation of (viii).
 \end{proof}

 \begin{corollary}\label{Cor3a}  Let $G(K)\in {\mathcal M}$, where ${\mathcal M}$ is defined by \eqref{Mdef}, and let $\widehat{\varepsilon}$ and $\varepsilon$ be the same as in Lemma~\ref{LJ28} and let $\beta$ and $\gamma$ satisfy \eqref{J30b}. Then

 \begin{itemize}
 \item[(i)] If $\eta_1^*<1$ then there are no coexistence equilibriums $G$.

 \item[(ii)] Let $\eta_2^*>\eta_1^*>1$. Then one of the following alternatives for $G(K)$ is valid:
   \begin{itemize}
 \item[a)]
  $I_2,I_{12}\leq\varepsilon$ and $I_1\geq\varepsilon$;
  \item[b)] $I_1,I_2\leq\varepsilon$ and $I_{12}\geq\varepsilon$, wherein
 \begin{equation}\label{Ju1a}
 G(K)={\mathcal S}_2(K)+O(\varepsilon)\;\;\;\mbox{for $K>K_0$}
 \end{equation}
 \end{itemize}

\item[(iii)] Let $\eta_1^*>\eta_2^*>1$. Then one of the following alternatives for $G(K)$ is valid:
 \begin{itemize}
 \item[a)]
 $I_2,I_{12}\leq\varepsilon$ and $I_1\geq\varepsilon$;
 \item[b)] $I_1, I_2,I_{12}\geq \varepsilon$;
 \item[c)] $I_1,I_{12}\leq\varepsilon$ and $I_2\geq\varepsilon$;
 \item[d)] $I_1,I_2\leq\varepsilon$ and $I_{12}\geq\varepsilon$, wherein
  $$
 G(K)={\mathcal S}_3(K)+O(\varepsilon)\;\;\;\mbox{for $K>K_0$}
 $$
 \end{itemize}

\item[(iv)] Let $\eta_1^*>1>\eta_2^*$. Then one of the following alternatives for $G(K)$ is valid:
 \begin{itemize}
 \item[a)]
 $I_2,I_{12}\leq\varepsilon$ and $I_1\geq\varepsilon$;
 \item[b)]$I_1, I_2,I_{12}\geq \varepsilon$,  wherein
 $$
 G(K)={\mathcal S}_4(K)+O(\varepsilon)\;\;\;\mbox{for $K>K_0$}.
 $$
 \end{itemize}
 \end{itemize}
 \end{corollary}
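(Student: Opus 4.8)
The plan is to obtain the corollary directly from Lemma~\ref{LJ28} by an exhaustive inspection of the possible profiles of a coexistence equilibrium $(G,K)\in{\mathcal M}$. I would fix $\varepsilon$ with $0<\varepsilon\le\widehat\varepsilon$ and $\bar\beta+\bar\gamma\le\varepsilon^{2}$ as in \eqref{J30b}, and split into the eight cases according to whether each of $I_1,I_2,I_{12}$ is $\ge\varepsilon$ or $<\varepsilon$. Parts (i), (ii), (iii) of the lemma discard the three profiles ``all small'', ``$I_2$ large while $I_1,I_{12}$ small'', and ``$I_1,I_2$ large while $I_{12}$ small''. The five remaining profiles are pinned down by parts (vii), (iv), (vi), (v), (viii) respectively, which place $G$ within $O(\varepsilon)$ of $G_{100}$, $G_{001}$, $G_{101}$, $G_{011}$, $G_{111}$; each of these parts also records the sign constraints on $\eta_1^*-1$, $\eta_2^*-1$, $\eta_1^*-\eta_2^*$ that it forces, together with the $K$-interval on which the corresponding $G_{ijk}(K)$ can occur.

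Assertion (i) then follows at once, since every surviving profile forces $\eta_1^*>1$; hence for $\eta_1^*<1$ no profile is admissible and ${\mathcal M}=\varnothing$. For (ii)--(iv) I would fix the regime and delete the profiles whose forced constraints fail: the all-positive profile needs $\eta_1^*>1$ and $\eta_1^*>\eta_2^*$ (part (viii)), the $G_{011}$-profile needs $1<\eta_2^*<\eta_1^*$ (part (v)), the $G_{001}$-profile needs $\eta_1^*,\eta_2^*>1$ (part (iv)), while the $G_{100}$- and $G_{101}$-profiles need only $\eta_1^*>1$. What remains in each of the three regimes is precisely the list of alternatives stated in the corollary, the $G_{101}$-profile being absorbed, through the next step, into the assertion $G(K)={\mathcal S}_j(K)+O(\varepsilon)$. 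That step upgrades the approximation $G(K)=G_{ijk}(K)+O(\varepsilon)$ from Lemma~\ref{LJ28} to $G(K)={\mathcal S}_j(K)+O(\varepsilon)$ by recalling from Section~\ref{SS1a} that, on the $K$-interval furnished by the lemma, the branch ${\mathcal S}_j$ is by construction exactly that $G_{ijk}(K)$ --- for instance $G_{001}$ is the arc of ${\mathcal S}_2$ for $K>K_2$ and of ${\mathcal S}_3$ for $K>K_4$, $G_{101}$ is an arc of ${\mathcal S}_2$ for $K_1<K<K_2$ and of ${\mathcal S}_3,{\mathcal S}_4$ for $K_1<K<K_5$, $G_{011}$ is the arc of ${\mathcal S}_3$ for $K_6<K<K_4$ --- and that, up to the $O(\varepsilon)$ slack in the lemma's bounds, all of these $K$-intervals lie to the right of $K_0$, which follows from $K_0=K_1+O(\bar\beta\,\bar\gamma)$ (see \eqref{K_0asymptotics}) together with the ordering $K_1<K_5<K_2$, $K_3<K_6<K_4$ of the thresholds recorded in Section~\ref{SS1a}.

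No estimate beyond Lemma~\ref{LJ28} enters, so the whole proof is a finite case check. The one point that I expect to need genuine care is the $K$-range bookkeeping: one must verify that the intervals produced by parts (iv)--(viii) abut correctly and jointly cover precisely the portions of ${\mathcal S}_2,{\mathcal S}_3,{\mathcal S}_4$ beyond $K_0$ on which those branches have the matching shape, so that \eqref{Ju1a} and its analogues hold on exactly the advertised $K$-sets (sharpening the left endpoint of the $G_{101}$-arc to $K_0$ itself uses, in addition, the bifurcation analysis of Proposition~\ref{P1}). Concretely this reduces to a slightly fiddly comparison of the thresholds in \eqref{J6a}--\eqref{J6b} with the bounds appearing in Lemma~\ref{LJ28}.
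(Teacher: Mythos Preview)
Your approach is essentially the same as the paper's: both proofs consist entirely of a case enumeration against Lemma~\ref{LJ28}, discarding impossible profiles via parts (i)--(iii), matching the survivors to the $G_{ijk}$ approximations in parts (iv)--(viii), and then identifying each $G_{ijk}$ with the relevant arc of ${\mathcal S}_j$ on the $K$-interval furnished by the lemma. The paper's version is terser---for (ii) it simply names which items of Lemma~\ref{LJ28} are compatible with $\eta_2^*>\eta_1^*>1$ and observes that on the overlap intervals (e.g.\ $K\in[K_2-c\varepsilon,K_2]$) the adjacent $G_{ijk}$ coincide, so the $O(\varepsilon)$ approximation transfers to ${\mathcal S}_2$; the remaining regimes are dispatched ``similarly''. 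Your write-up spells out the same argument in more detail, and your remark that the $G_{101}$-profile must be absorbed into the ${\mathcal S}_j$-approximation (rather than appearing as a separate listed alternative) is a point the paper's proof glosses over.
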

 \begin{proof} (i) Lemma \ref{LJ28} covers all possible locations of coexistence equilibrium points and in all alternatives $\eta_1^*>1$. This proves this assertion.

 (ii) The inequalities $\eta_2^*>\eta_1^*>1$ are met in Lemma \ref{LJ28} (iv), (vi) and (vii). Let the alternative (iv) be valid. Then the relation (\ref{J30a}) holds with $K>K_2-c\varepsilon$. Since $G_5(K)=G_6(K)$ for $K\in [K_2-c\varepsilon,K_2]$ we obtain (\ref{Ju1a}) for the case under consideration. The remaining two alternatives are considered similarly.

 (iii) The inequalities $\eta_1^*>\eta_2^*>1$ are found in Lemma \ref{LJ28} (iv)--(viii) and they correspond to $G_{001},\,G_{011},\,G_{101},\,G_{100}$ and $G_{111}$ respectively. The same argument as in (ii) gives the proof in this case also.

 (iv)  The inequalities $\eta_1^*>1>\eta_2^*$ occur in Lemma \ref{LJ28} (vi)--(viii) and they correspond to $G_{101}$, $G_{100}$ and $G_{111}$ respectively. The argument from (ii) is applicable here also and we get the required result.

 \end{proof}

 The above corollary motivates the following refinement of \eqref{Mdef}
 \begin{equation}
 {\mathcal M}_0=\{ {\mathcal S}_2 \;\mbox{if $\eta_2^*>\eta_1^*>1$},\; {\mathcal S}_3\;\mbox{if $\eta_1^*>\eta_2^*>1$},\;{\mathcal S}_4\;\mbox{if
 $\eta_1>1>\eta_2^*$}\}.\label{Sept1b}
 \end{equation}

\subsection{Positivity of the Jacobian}\label{Positivity of the Jacobian}

\begin{theorem}\label{Tdet1} 
There exists a positive constant $c_*=c_*(\alpha,\mu,\eta, r)$ such that if $\bar{\beta}+\bar{\gamma}\leq c_*$ then
$$
 \det{J}(S,I_1,I_2,I_{12};K)>0
$$
for all solutions to \eqref{SIRequilibriumrow1}-\eqref{SIRequilibriumrow4} with positive components and $K>0$.
\end{theorem}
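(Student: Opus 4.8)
Everything is driven by the identity \eqref{M1}: since $S,I_1,I_2,I_{12}>0$ on $\mathcal M$, proving $\det J>0$ amounts to proving
\[
\beta_2\frac{SI_{12}}{I_2^2}\,A+\beta_1\frac{SI_{12}}{I_1^2}\,B+C>0,
\]
where $A=A_1+\tfrac{r}{K}\eta_1\big(\eta_1-\beta_1\tfrac{S}{I_1}+\bar\gamma\tfrac{I_2}{I_{12}}\big)+\beta\alpha_1\eta_1$ with $A_1>0$, $B$ has the analogous form with a positive counterpart of $A_1$, and $C=\Delta_\alpha^2+O(\gamma)+O(\beta_1/I_1)+O(\beta_2/I_2)$ with $\Delta_\alpha\neq0$ by \eqref{J6c}. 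The only obstruction is that the $O(\beta_1/I_1),O(\beta_2/I_2)$ pieces of $C$, and the term $-\tfrac{r}{K}\eta_1\beta_1 S/I_1$ inside $A$, become large exactly when $I_1$ or $I_2$ shrinks to order $\bar\beta+\bar\gamma$; the whole proof is an organised inspection of those degenerations.

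I would fix $\varepsilon\in(0,\widehat\varepsilon]$ (with $\widehat\varepsilon$ from Lemma~\ref{LJ28}) small enough that every $O(\varepsilon)$ correction below is dominated by the fixed positive constants $\Delta_\alpha^2$, $\tfrac{r}{K}\eta_1^2$, and the constant $c$ of \eqref{Sept1a}, and set $c_*=\varepsilon^2$, so that $\bar\beta+\bar\gamma\le c_*$ is precisely \eqref{J30b}. By Lemma~\ref{LJ28} every $(G,K)\in\mathcal M$ lies, on the corresponding $K$-interval, in one of the cases (iv)--(viii) (cases (i)--(iii) rule out the remaining configurations): $G(K)=G_{001}(K)+O(\varepsilon)$, $G_{011}(K)+O(\varepsilon)$, $G_{101}(K)+O(\varepsilon)$, $G_{100}(K)+O(\varepsilon)$ or $G_{111}(K)+O(\varepsilon)$. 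In the interior case (viii) (near $G_{111}$, hence $\eta_1^*>\eta_2^*$) one has $S=\tfrac{\Delta_\mu}{\Delta_\alpha}+O(\varepsilon)>\sigma_2+\delta$ by \eqref{J4b}, so $A,B\ge c/K>0$ by \eqref{Sept1a}, while $C=\Delta_\alpha^2+O(\varepsilon)>0$ because $I_1,I_2\ge\varepsilon$; thus the displayed expression is a sum of nonnegative terms with a positive one. The same argument settles cases (iv)--(vi) on the part of the $K$-interval where the small compartment stays bounded below by a fixed constant: one keeps $A>0$ near $G_{101},G_{001}$ (where $S>\sigma_1+\delta$) via \eqref{Koz1}, $B>0$ near $G_{011},G_{001}$ (where $S>\sigma_2+\delta$) via \eqref{Koz1a}/\eqref{Sept1a}, and $A=A_1+\tfrac{r}{K}\eta_1^2+O(\varepsilon)>0$ directly whenever $I_1\ge\varepsilon$.

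The real work is when a compartment, say $I_2$, is as small as $\bar\beta$. I would split each such region in two. Let $C'$ bound the constants in the $O(\beta_2/I_2)$ part of $C$ and in $\beta_2\tfrac{SI_{12}}{I_2^2}\cdot(-\tfrac{r}{K}\eta_1\beta_1\tfrac{S}{I_1})$. If $I_2\le SI_{12}A/(2C')$, gather all $1/I_2$-singular contributions:
\[
\beta_2\frac{SI_{12}}{I_2^2}A+(\text{singular remainder})=\frac{\beta_2}{I_2}\Big(\frac{SI_{12}A}{I_2}-C'\Big)+\cdots\ \ge 0
\]
(using $A>0$ there), so the left side of the displayed inequality is $\ge\Delta_\alpha^2/2+O(\varepsilon)>0$. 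If instead $I_2>SI_{12}A/(2C')$, then $S\ge\sigma_1/2$ (by \eqref{K11ag}), $I_{12}\ge\varepsilon$, $A\ge c>0$ and $\beta_2\le\varepsilon^2$ force $\beta_2/I_2=O(\varepsilon)$, whence $C=\Delta_\alpha^2+O(\varepsilon)>0$ and all three terms are controlled. The symmetric argument handles $I_1$ small, with $B$ in the singular role. The one decisive input is, again, a fixed positive lower bound for $A$ (resp.\ $B$) throughout the region, which is guaranteed by the criteria of the previous paragraph as long as $K$ stays away from the leftmost bifurcation value $K_0$.

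The regime not closed by this dichotomy is a neighbourhood of $K_0$ (equivalently case (vii) with $I_{12}$ itself of order $\bar\beta$), where the $1/I_{12}$-singular parts of the $A$-term and of $C$ would have to be compared quantitatively; here I would simply invoke Proposition~\ref{P1}: for $K_0<K<K_0+\delta_1$ the inner equilibrium has one small eigenvalue $\lambda_0(K)=-\tfrac{r\sigma_1}{\alpha_1K_0^2}(K-K_0)+O((K-K_0)^2)<0$, while by the proposition stated just before Proposition~\ref{P1} together with continuity the remaining three eigenvalues keep negative real part bounded away from the imaginary axis, so their product is a negative real number and $\det J=\lambda_0\cdot(\text{product of the other three})>0$. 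For $K\ge K_0+\delta_1$ the compartment $I_{12}$ is bounded below by a fixed constant and the previous paragraphs apply. Assembling the cases gives $\det J>0$ on all of $\mathcal M$. The main difficulty is not any single estimate but the bookkeeping: one must fix the hierarchy $\varepsilon\gg c_*=\varepsilon^2$ and the widths $\delta,\delta_1$ once and for all so that every $O(\varepsilon)$ and $O(\bar\beta+\bar\gamma)$ correction is genuinely swallowed, uniformly over the range of $K$ (bounded except in case (viii) with $\eta_2^*<1$, where only $A,B\ge0$ is needed rather than a positive lower bound).
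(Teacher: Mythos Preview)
Your overall architecture matches the paper's: classify coexistence equilibria via Lemma~\ref{LJ28}, use identity \eqref{M1} together with the sign information on $A,B$ coming from \eqref{Koz1}--\eqref{Koz1a}/\eqref{Sept1a}, and close the one genuinely delicate region (a neighbourhood of $K_0$) by the eigenvalue asymptotics of Proposition~\ref{P1}. The dichotomy you set up is essentially the content of the paper's inequality \eqref{M1x}, so in the bounded-$K$ regimes (v), (vi), (viii) and case (vii) near $K_0$ your argument is correct and close to the paper's.

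There is, however, a genuine gap in case (iv) (near $G_{001}$) for large $K$. Your closing parenthetical asserts that $K$ is bounded ``except in case (viii) with $\eta_2^*<1$'', but Lemma~\ref{LJ28}(iv) gives $K\ge\max(K_2,K_4)-c\varepsilon$ with no upper bound, and this case occurs precisely when $\eta_2^*>1$. In that regime both $I_1,I_2\le\varepsilon$, so the direct estimate ``$A=A_1+\tfrac{r}{K}\eta_1^2+O(\varepsilon)$ whenever $I_1\ge\varepsilon$'' is unavailable, and the only bound you cite, \eqref{Sept1a}, gives merely $A,B\ge c/K$. Feeding $A\ge c/K$ into your dichotomy makes the threshold $SI_{12}A/(2C')$ shrink like $1/K$; in the second branch you then get only $\beta_2/I_2=O(\varepsilon^2 K)$, which is useless for $K\gg 1/\varepsilon^2$. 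Moreover you need both dichotomies (for $I_1$ and $I_2$) simultaneously here, which you do not spell out. The paper sidesteps all of this by substituting the equilibrium relations $-\beta_iSI_{12}/I_i=g_{i+1}$ and $-\bar\gamma I_1I_2/I_{12}=g_4$ directly into $J$, so that the diagonal becomes $(-\tfrac{r}{K}S,\,g_2,\,g_3,\,g_4)$ with $g_2,g_3$ bounded away from zero by \eqref{J21as1}--\eqref{J21as2}; expanding gives $\det J=g_2g_3\alpha_3^2SI_{12}+O(\varepsilon)>0$. If you want to rescue the dichotomy instead, note that in case (iv) the positive part $A_1$ contains $\alpha_3\beta_1\tfrac{I_{12}}{I_1}\eta_1=\alpha_3|g_2|\eta_1/S\ge c>0$ (and analogously for $B$), which upgrades $A,B\ge c$ uniformly in $K$; but this observation is missing from your write-up.
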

\begin{proof}  Since the point $G_{000}$ is globally asymptotically stable for $K\in (0,\sigma_1)$, it is sufficient to prove our assertion for $K\geq\sigma_1$ and for components satisfying (\ref{K11ag}) and (\ref{K11a}).
Next, we note that according to (\ref{M1}) and (\ref{M1c}) the determinant is positive for  $K\geq \sigma_1$ and small $\bar{\beta}+\bar{\gamma}$ if all components of solution $G$ to (\ref{SIRequilibriumrow1})-(\ref{SIRequilibriumrow4}) are bounded from below by a positive constant depending on the basic parameters.

So it is sufficient to prove positivity of the Jacobian near the boundary ${\mathcal M}_0$ only. We consider two cases for $K$.

{\bf Large $K$.} Let us assume that
\begin{equation}\label{Klarge}
K\geq \max(K_2,K_4)+c\varepsilon\qquad \text{ and }\qquad 0<\varepsilon\leq\hat{\varepsilon},
\end{equation}
where $\hat{\varepsilon}$ is the same as in Lemma \ref{LJ28}.
According to Lemma \ref{LJ28} large values of $K$ for elements in ${\mathcal M}$ can be found only in the cases (iv) and (viii) when $\eta_1^*>1>\eta_2^*$.
In the case (iv) by (\ref{J21a}) we have
\begin{equation}\label{J21a}
J=\begin{bmatrix}
  -\frac{r}{K}S & -\alpha_1S & -\alpha_2S & -(\beta+\alpha_3)S
\\
O(\varepsilon)&g_2 &-\gamma_1I_1 & O(\varepsilon)
\\
O(\varepsilon) & -\gamma_2I_{2} & g_3 &O(\varepsilon)
\\
\alpha_3I_{12} & \eta_1I_{12}+\gamma I_{2}& \eta_2I_{12}+\gamma I_1 & -\gamma\frac{I_1 I_2}{I_{12}}
\end{bmatrix}
\end{equation}
Therefore
$$
\det J=g_2g_3\alpha_3^2SI_{12}+O(\varepsilon),
$$
where $S$ and $I_{12}$ are given by (\ref{J21as}) and $g_2$ and $g_3$ are given by (\ref{J21as1}) and (\ref{J21as2}) respectively. Therefore the Jacobian is positive for coexistence equilibrium points when \eqref{Klarge} is fulfilled. In the case (viii) according to (\ref{M1})
$$
\det J>SI_1I_2I_{12}(\Delta_\alpha^2+O(\varepsilon))
$$
for the large values of $K$. This implies the positivity of the Jacobian for small $\varepsilon$.

{\bf Bounded $K$.} Now, it remains to prove Theorem \ref{Tdet1} in an $\varepsilon$-neighborhood of the set
$$
{\mathcal M}_{K_*}=\{(G,K)\in {\mathcal M}_0\,:\, K\leq K_*\}.
$$
where $M_0$ is introduced by (\ref{Sept1b}) and $K_*$ is a certain positive number depending on the basic parameters. We denote such neighborhood by ${\mathcal M}_{\varepsilon}$, namely
 $$
 {\mathcal M}_\varepsilon=\{(G,K)\in {\mathcal M}: |G(K)-{\mathcal S}_j(K)|\leq c\varepsilon\,:\,\mbox{for certain}\, j=2,3,4,\;K\in[\sigma_1,K_*]\}.
 $$
In order to prove the positivity of the Jacobian for such equilibrium points it is sufficient to show that every point in $\mathcal M_{K^*} $
 has a neighborhood $U$ such that the Jacobian of all solutions $G$ to \eqref{SIRequilibriumrow1}-\eqref{SIRequilibriumrow4} in $U\cap {\mathcal M}$ is positive.
This assertion is trivial for $(G,K)\in \mathcal{M}_\varepsilon$ close to boundary equilibrium points. Since the considerations in Lemma \ref{LJ28} (i), (ii) are applicable for all $K\geq \sigma_1$ existence of such neighborhoods for $G_{000}$ and $G_{010}$ follows from  Lemma \ref{LJ28} (i), (ii). Furthermore the assertion for equilibrium points which are not bifurcation points can be verified due to the fact that the Jacobian matrix is invertible there.
 Therefore it is sufficient to prove existence of such neighborhoods for bifurcation points on the branches $G_{100}$, $G_{001}$, $G_{101}$ and $G_{011}$ only. Moreover we can use that the $S$-components of the bifurcation points of branches $G_{001}$, $G_{101}$ and $G_{011}$ is greater than $\sigma_2$.

 Consider first the bifurcation points of the branches  $G_{001}$, $G_{101}$ and $G_{011}$. The $S$ components of these points are greater than $\sigma_2$. According to (\ref{Sept1a}) and (\ref{M1})--(\ref{M1c})
\begin{equation}\label{M1x}
\det{J}\frac{1}{SI_1I_2I_{12}}\geq c_1\Big(\beta_2\frac{SI_{12}}{I_2^2}+\beta_1\frac{SI_{12}}{I_1^2}\Big)+\Delta^2
+O(\gamma)+O\Big(\frac{\beta_1}{I_1}\Big)+O\Big(\frac{\beta_2}{I_2}\Big),
\end{equation}
which implies positivity of the Jacobian in 
 a small neighborhood of a point on this branch provided $K-\sigma_3$ is greater then a certain constant.
The case of $G_{001}$ is trivial and the cases when the points are on the branches $G_{101}$ and $G_{011}$ are considered similarly to the above.
We start from constructing such neighbourhoods near bifurcation points of the branches and consider the bifurcation point of the branch $G_{100}(K)$ defined for $K>\sigma_1$.
The bifurcation point is of the form $K_*=K_1+O(\beta_2\bar{\gamma})$.
The Jacobian matrix at this point has three eigenvalues with negative real part (uniformly with respect to small parameters $\beta$ and $\gamma$) and according to Proposition \ref{PK1} the forth eigenvalue has an asymptotic
$$
\lambda_4(K)=-\omega (K-K_3)+O((K-K_3)^2)\;\;\mbox{for $|K-K_3|\leq\varepsilon$},
$$
where $\varepsilon$ is a small positive number depending on the main parameters $\alpha$, $\mu$, $\eta$ and $r$.
This implies that the Jacobian is non-zero for solutions of \eqref{SIRequilibriumrow1}-\eqref{SIRequilibriumrow4}, in a small neighborhood of $G_{100}$. This completes the proof.
\end{proof}

\section{Stability of the inner equilibrium branch}\label{Stability of the inner equilibrium branch}

According to Theorem \ref{Tdet1} there exists a positive constant $c_*$ depending on ${\mathcal B}$-constants  in \eqref{Bparam}  such that
\begin{equation}\label{J2a}
\det J(G)>0\;\;\;\mbox{for all coexistence solutions $G$ of \eqref{SIRequilibriumrow1}-\eqref{SIRequilibriumrow4} satisfying $\bar{\beta}+\bar{\gamma}\leq c_*$ }.
\end{equation}

\begin{theorem}\label{TJu9}
Let us assume that the assumption \eqref{J2a} is satisfied. Then
\begin{itemize}
\item[(i)] If $\eta_1^*<1$ then there are no coexistence equilibrium point.

\item[(ii)] If $\eta_1^*>1$ then there is a smooth branch $G_*(K)$, $K>K_0$, of coexistence equilibrium points having a limit as $K\to K_0$ with $G_*(K_0)=G_{010}(K_0)$. Moreover, all coexistence equilibrium points belong to this branch.
    \end{itemize}

\end{theorem}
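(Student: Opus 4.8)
The plan is to combine the local bifurcation analysis from Proposition \ref{P1} with a global continuation argument controlled by Theorem \ref{Tdet1} and the localization provided by Lemma \ref{LJ28} and Corollary \ref{Cor3a}. Part (i) is immediate: by Corollary \ref{Cor3a}(i), when $\eta_1^*<1$ there are no coexistence equilibrium points at all under the standing smallness assumption on $\bar\beta+\bar\gamma$, and \eqref{J2a} only strengthens the hypotheses, so nothing remains to prove. The whole content is therefore part (ii).

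For part (ii), first I would establish the \emph{local} branch near $K_0$. Proposition \ref{P1} already produces, for $K$ slightly larger than $K_0$, a smooth family of coexistence equilibria $G^*(K)=(S^*,I_1^*,I_2^*,I_{12}^*)$ with the stated asymptotics; in particular $I_2^*,I_{12}^*\to 0$ and $S^*\to\sigma_1$, $I_1^*\to r/(\eta_1^*\alpha_1)$ as $K\to K_0$, so the limit point is $G_{100}(K_0)$ — which by \eqref{K_0asymptotics} and the formulas in Proposition \ref{pro:equil} coincides with $G_{010}(K_0)$ precisely at the bifurcation value (the two boundary branches meet there, cf.\ the Remark after Proposition \ref{P1} and the identification $K_0=K_3$ in the $\beta=0$ picture). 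I would state this matching carefully. Smoothness of the local branch follows from the implicit function theorem once we quotient out the bifurcation direction, exactly as set up in Section \ref{sectionbufurcation}; the Jacobian of the reduced system is invertible there because the $3\times 3$ block corresponding to the non-vanishing eigenvalues is nondegenerate by Proposition \ref{PK1}/Proposition~\ref{P1}.

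Next I would run the global continuation. Starting from the local branch, extend $G_*(K)$ as a $C^1$ curve of solutions of \eqref{SIRequilibriumrow1}-\eqref{SIRequilibriumrow4} by the implicit function theorem; this is possible as long as $\det J(G_*(K))\neq 0$, which holds by \eqref{J2a} for \emph{every} coexistence equilibrium — so the branch can only terminate by one of its positive components tending to $0$ (reaching the boundary of $\mathcal M$) or by $K$ escaping to $\infty$ or decreasing back toward $K_0$. The key point is that it cannot hit the boundary for $K>K_0$: by Lemma \ref{LJ28}, near the boundary every coexistence equilibrium is $O(\varepsilon)$-close to one of the explicit boundary branches $G_{100},G_{001},G_{101},G_{011},G_{111}$, and the allowed $K$-ranges listed in Lemma \ref{LJ28}(iv)--(viii) together with Corollary \ref{Cor3a} show that such near-boundary coexistence points can only occur for $K>K_0$ along the branches $\mathcal S_2,\mathcal S_3,\mathcal S_4$, which are themselves continuous and connected; in particular there is no boundary point through which the branch could exit. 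Uniform bounds \eqref{K11ag}--\eqref{K11a} keep the solution in a compact set, and a standard open-closed argument on the set of $K>K_0$ reachable by continuation then shows the branch extends for all $K>K_0$. Finally, \emph{all} coexistence equilibria lie on this branch: by Corollary \ref{Cor3a}, for each admissible $\eta^*$-regime every coexistence equilibrium is $O(\varepsilon)$-close to the single branch $\mathcal M_0$ (one of $\mathcal S_2,\mathcal S_3,\mathcal S_4$), and nondegeneracy of $\det J$ prevents two distinct branches of solutions from coexisting — a second branch would have to be disjoint from $G_*(K)$ yet both are forced into the same $O(\varepsilon)$-tube around $\mathcal M_0$, and connectedness plus the implicit function theorem rule this out.

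The main obstacle I anticipate is the ``all coexistence equilibrium points belong to this branch'' clause: showing uniqueness of the global branch, not merely its existence. Existence is a routine continuation argument once Theorem \ref{Tdet1} is in hand, but uniqueness requires genuinely using the fine localization of Lemma \ref{LJ28} — one must argue that the $O(\varepsilon)$-tube around $\mathcal S_j$ contains exactly one connected component of $\mathcal M$, which in turn relies on the nonvanishing Jacobian to exclude folds or extra components inside the tube, and on matching the tube-ends correctly at the bifurcation values $K_1,K_2,K_4,K_5,K_6$ where consecutive boundary branches in the diagram are glued. Care is also needed at $K=K_0$ itself, where $\det J$ of the \emph{full} system does vanish (that is the bifurcation), so the continuation argument must be phrased in terms of the reduced system there, consistently with Proposition \ref{P1}.
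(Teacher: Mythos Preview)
Your overall strategy---local bifurcation via Proposition~\ref{P1}, then global continuation powered by Theorem~\ref{Tdet1}---matches the paper's. The gap is in how you argue the continued branch cannot terminate at a boundary point for $K>K_0$, and relatedly in the uniqueness part. You write that near the boundary every coexistence equilibrium is $O(\varepsilon)$-close to one of $G_{100},G_{001},G_{101},G_{011},G_{111}$ and that these sit on the connected curves $\mathcal S_j$, hence ``no boundary point through which the branch could exit.'' This conflates two different things: the \emph{unperturbed} branches $\mathcal S_j$ (which include $G_{001},G_{101},G_{011},G_{111}$) and the actual boundary equilibria of the \emph{perturbed} system. For $\beta,\gamma>0$ the only equilibria with a zero component are $G_{000},G_{100},G_{010}$; any limit point $\widehat G$ of $G_*(K)$ on the boundary must be one of these. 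The paper then uses a clean dichotomy you omit: if $\det J(\widehat G)\neq 0$, the implicit function theorem forces $G_*$ to coincide locally with the boundary curve through $\widehat G$, which is impossible; hence $\det J(\widehat G)=0$, and the only such point on $G_{000}\cup G_{100}\cup G_{010}$ is the bifurcation point $G_{100}(K_0)$ itself---contradiction. Your Lemma~\ref{LJ28} route can be made to give the same conclusion (parts (i),(ii),(vii) exclude approaching $G_{000}$, $G_{010}$, or $G_{100}$ away from $K\approx K_0$), but as written your argument does not isolate the correct boundary set and the ``connectedness of $\mathcal S_j$'' step is a non sequitur.

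For uniqueness the paper does not try to show the $O(\varepsilon)$-tube contains a single component; it simply runs the same continuation \emph{backward} from any hypothetical coexistence point $G(K^*)$: the backward branch must hit the boundary, the same dichotomy forces the hitting point to be $G_{100}(K_0)$, and therefore the branch is $G_*$. This avoids the quantitative ``tube narrow enough for IFT uniqueness'' issue you anticipate. Finally, do not try to identify $G_{100}(K_0)$ with $G_{010}(K_0)$; they are distinct equilibria for every $K$, and the appearance of $G_{010}$ in the statement is a typographical slip for $G_{100}$ (consistently with Proposition~\ref{P1} and the proof).
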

\begin{proof} (i) According to Proposition \ref{P1} there exists a branch $G_*(K)$ starting from the point $G_{010}(K_1)$, and defined in an $\varepsilon$-neighborhood of $K_1$. Moreover the part corresponding to $K\in (K_1,K_1+\varepsilon)$ consists of interior points. Due to assumption (\ref{J2a}) this curve can be continued to a maximum interval $(K_1,M)$. Due to inequalities (\ref{K11ag}) and (\ref{K11a}) all limit points obtained as
$$
\widehat{G}=\lim_{j\rightarrow\infty}G_*(K^{(j)})\;\;\mbox{for a certain sequence $\{K^{(j)}\}$, $K^{(j)}\to M$ as $j\to \infty$}
$$
belong to the boundary (have a zero component). Certainly the point $\widehat{G}$ lies on one of curves $G_{000}$ or $G_{010}$. If the Jacobian at $\widehat{G}$ does not vanish then the curve $G_*$ coincides with $G_{000}$ (or $G_{100}$) in a neighborhood of $\widehat{G}$ which is impossible by construction of $G_*$. Therefore the Jacobian at $\widehat{G}$ vanishes. By Theorem \ref{Tdet1} (see the step 1 in the proof)the point $\widehat{G}$ cannot lie on $G_{000}$ branch. If $\widehat{G}$ belong to $G_{010}$ branch then it must coincides with $G_{100}(K_0)$ which is also impossible. This contradiction shows that $M=\infty$.

In order to prove that there are no other inner equilibrium points we may assume that there is a $G$ which solves the problem \eqref{SIRequilibriumrow1}-\eqref{SIRequilibriumrow4} for $K=K^*$. We may assume that $K^*\geq\sigma_1$. Since the Jacobian does not vanish $G$ is situated on a branch of inner equilibrium point and it can be continued for $K<K^*$ until this curve meets the boundary. But by the same argument as above we show that this meeting can occur only at the point $G_{100}(K_0)$. This implies that this entire curve is a part of the curve $G_*$.

(ii) The same argument as in (i) proves that if there is an interior point that it must lie on a equilibrium curve starting from a bifurcation point on the branch $G_{100}(K)$. Since $\eta_1^*<1$ then for small ${\mathcal I}$ constant there are no bifurcation point on $G_{100}$. This proves (ii).

\end{proof}

\begin{theorem} (i) Let $\eta_1^*<1$. Then all locally stable equilibrium points are located on the branch ${\mathcal S}_1$.

(ii) Let
 $\eta_1^*>1$ and $\eta_2^*>1$. Then all coexistence equilibrium points $G_*(K)$ are locally stable for $K\in (K_0,\infty)$ provided $\bar{\beta}+\bar{\gamma}$ is sufficiently small.

(iii) Let
 $\eta_1^*>1>\eta_2^*$. Then for every $K^*>K_0$ there exists a positive constant $c_1$ depending on $\alpha$, $\mu$, $\eta$, $r$ and $K^*$ such that if $\bar{\beta}+\bar{\gamma}\leq c_1$ then coexistence equilibrium points $G(K)$ are locally stable for $K\in (K_0,K^*]$.

 \end{theorem}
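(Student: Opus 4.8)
The plan is to reduce each assertion to a statement about the four eigenvalues of the Jacobian $J$ at the relevant equilibrium, combining two facts established above: the representation \eqref{M1}--\eqref{M1c} together with the positivity $\det J>0$ of Theorem~\ref{Tdet1}, and the fact from Corollary~\ref{Cor3a} that for $\bar\beta+\bar\gamma$ small every coexistence point with $K>K_0$ lies within $O(\varepsilon)$ of one of the globally stable unperturbed branches $\mathcal S_2,\mathcal S_3,\mathcal S_4$ of Proposition~\ref{pro:equil}.

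\emph{Part (i).} When $\eta_1^*<1$ there are no coexistence equilibria by Theorem~\ref{TJu9}, so only $G_{000},G_{100},G_{010}$ need to be examined. The point $G_{010}(K)$ is unstable for small $\gamma$, and $G_{000}(K)$ is stable exactly for $K\le\sigma_1$. For $G_{100}(K)$, $K>\sigma_1$, the stability criterion is $\det H>0$ recalled just before \eqref{J5a}; since $\frac{rA_1}{\alpha_1}-\eta_1I_1^*>\frac{rA_1}{\alpha_1}(1-\eta_1^*)>0$ uniformly in $K$ when $\eta_1^*<1$, while the remaining term in $\det H$ is $O(\bar\beta\bar\gamma)$, one gets $\det H>0$ for $\bar\beta+\bar\gamma$ small, hence $G_{100}(K)$ is stable for all $K>\sigma_1$. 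Thus every locally stable equilibrium lies on $\mathcal S_1$.

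\emph{Mechanism for (ii) and (iii).} Let $G_*(K)$, $K>K_0$, be the unique coexistence branch of Theorem~\ref{TJu9}. The key algebraic remark is that, although $J$ in \eqref{M1} carries the apparently singular entries $\beta_1SI_{12}/I_1$, $\beta_2SI_{12}/I_2$, $\bar\gamma I_1I_2/I_{12}$, the equilibrium equations \eqref{SIRequilibriumrow2}--\eqref{SIRequilibriumrow4} identify these with $-g_2,-g_3,-g_4$; hence along $G_*(K)$ one has $J_{22}=g_2$, $J_{33}=g_3$, $J_{44}=g_4$ and $J(G_*(K);K)=\widehat J(G_*(K);K)+O(\bar\beta+\bar\gamma)$, where $\widehat J$ is the everywhere-smooth (polynomial) Jacobian of the unperturbed system \eqref{I2}. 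Combining this with $|G_*(K)-\mathcal S_j(K)|=O(\varepsilon)$ gives $J(G_*(K);K)=\widehat J(\mathcal S_j(K);K)+O(\varepsilon)$. Now $\widehat J(\mathcal S_j(K);K)$ has at every $K$ at least three eigenvalues with real part $\le -c_0<0$ (four away from the bifurcation points $\sigma_1,K_1,K_5,K_6,K_2,K_4$ of $\mathcal S_j$, exactly three at each of those, the fourth being $0$, the bifurcations being simple), with $c_0$ uniform on any bounded $K$-interval by continuity of the spectrum and compactness. Hence, for $\bar\beta+\bar\gamma$ small, $J(G_*(K);K)$ has three eigenvalues with real part $\le -c_0/2$; being a real $4\times4$ matrix with $\det J>0$ by Theorem~\ref{Tdet1}, its fourth eigenvalue is then forced to be real and negative (the product of three eigenvalues in the open left half-plane is negative — three negative reals, or a negative real times a modulus squared — so the fourth, equal to $\det J$ over that product, is negative). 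Thus all four eigenvalues have negative real part. For $K$ close to $K_0$ the same conclusion follows directly from Proposition~\ref{P1}, whose \eqref{J1c} already gives $\lambda_0(K)<0$ for $K\in(K_0,K_0+\delta)$. This proves part (iii), with $c_1=c_1(\alpha,\mu,\eta,r,K^*)$ extracted from the uniform $c_0$ on $[K_0,K^*]$ and the constant of Theorem~\ref{Tdet1}; and it proves part (ii) on every \emph{bounded} subinterval of $(K_0,\infty)$, in particular on $(K_0,\max(K_2,K_4)+\delta]$.

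\emph{Large $K$ in part (ii): the main obstacle.} It remains to treat $K\ge\max(K_2,K_4)+\delta$ in part (ii), where $\eta_1^*,\eta_2^*>1$ forces $\mathcal S_j(K)=G_{001}(K)$ and hence $I_1^*,I_2^*=O(\bar\beta)$ along $G_*(K)$. Here a direct computation shows $\widehat J(G_{001}(K);K)$ has a complex eigenvalue pair with real part $-\frac{r\sigma_3}{2K}\to 0^-$, so the gap $c_0$ degrades and the crude $O(\bar\beta+\bar\gamma)$ estimate no longer settles the sign of the real part of that pair; this is the main difficulty. I would handle it by a Schur-complement analysis of $J(G_*(K);K)$ itself: the $(I_1,I_2)$-block equals $\mathrm{diag}(g_2,g_3)$ up to small terms, with $g_2,g_3$ bounded away from $0$, and it couples back to the $(S,I_{12})$-block only through $O(\bar\beta)$ entries (since $I_1^*,I_2^*=O(\bar\beta)$), so eliminating it leaves an effective $2\times2$ matrix for $(S,I_{12})$ whose determinant is $\approx r\alpha_3\sigma_3>0$ and whose trace is $-\frac{rS^*}{K}+g_4$ minus a manifestly positive $O(\bar\beta)$ correction (each Schur term is a sum of products of positive quantities, using $g_4<0$ and $-\eta_iI_i^*+\beta_iS^*<0$ for large $K$), hence negative; therefore the two eigenvalues near $\pm i\sqrt{r\alpha_3\sigma_3}$ have negative real part, and together with the two eigenvalues near $g_2,g_3$ this yields local stability uniformly for all $K>K_0$. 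By contrast, in part (iii) the condition $\eta_2^*<1$ makes $\mathcal S_4$ terminate at the \emph{interior} equilibrium $G_{111}$, near which arbitrarily small $\beta,\gamma$ produce a Hopf bifurcation for large $K$ — the phenomenon already seen in \cite{part1,part2} — which is precisely why part (iii) can only assert stability on bounded intervals $(K_0,K^*]$ with a $K^*$-dependent threshold.
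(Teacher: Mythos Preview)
Your overall strategy for (i), (iii) and the bounded-$K$ part of (ii) coincides with the paper's: use Corollary~\ref{Cor3a} to place $G_*(K)$ within $O(\varepsilon)$ of the stable unperturbed branch $\mathcal S_j$, deduce that three eigenvalues of $J(G_*(K))$ have negative real part, and then let $\det J>0$ (Theorem~\ref{Tdet1}) force the fourth into the left half-plane. Your observation that $J_{22}=g_2$, $J_{33}=g_3$, $J_{44}=g_4$ along the coexistence branch, so that $J$ is a genuinely bounded (not singular) perturbation of $\widehat J$, is exactly the algebraic point that makes this comparison legitimate; the paper uses it implicitly.

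Where you go beyond the paper is the large-$K$ regime in (ii). You are right that the paper's one-line ``therefore \dots at least three eigenvalues with negative real part'' tacitly needs a spectral gap that degenerates as $K\to\infty$: at $G_{001}(K)$ the complex pair has real part $-r\sigma_3/(2K)\to 0^-$, so an $O(\varepsilon)$ perturbation of the matrix does not by itself keep that pair in the open left half-plane, and then the determinant argument no longer bites (a conjugate pair crossing the axis leaves $\det J>0$ intact). Identifying this is a genuine contribution of your write-up, and your Schur-complement route is the natural fix. Your sign claims are correct: using $I_i^*=-\beta_iSI_{12}/g_i$ one has $-\eta_iI_i^*+\beta_iS=\beta_iS(g_i+\eta_iI_{12})/g_i<0$ since $g_i+\eta_iI_{12}=\alpha_i(S-\sigma_i)-\gamma_iI_j>0$ near $G_{001}$, so both diagonal Schur corrections are indeed positive.

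There is, however, a gap in the final step. The ``effective $2\times2$'' you compute is the Schur complement $A-FD^{-1}E$ at $\lambda=0$; its eigenvalues are not the eigenvalues of $J$ near $\pm i\sqrt{\alpha_3(\bar\beta+\alpha_3)SI_{12}}$. The two eigenvalues of $J$ away from $g_2,g_3$ are the roots of $\det\bigl((A-\lambda I)-F(D-\lambda I)^{-1}E\bigr)=0$, and since those roots have $|\lambda|$ comparable to $|g_2|,|g_3|$ one cannot replace $(D-\lambda I)^{-1}$ by $D^{-1}$. To close the argument you should either (a) track the $\lambda$-dependent Schur complement and show its two roots have negative real part (the key computation is that $\mathrm{Re}\,\bigl((g_k-\lambda)^{-1}\bigr)$ keeps the sign of $g_k$ on the imaginary axis, so your positivity of the diagonal corrections survives), or (b) verify the Routh--Hurwitz condition $(a_1a_2-a_3)a_3-a_1^2a_4>0$ directly from the entries of $J$, using that $a_1=-\mathrm{tr}\,J\ge |g_2|+|g_3|$ is bounded below and $a_4=\det J$ is controlled by \eqref{M1}--\eqref{M1c}.
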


\begin{proof} (i) According to Corollary \ref{Cor3a} (i) there are no coexistence equilibrium points in this case and the only branch of locally stable equilibrium points is ${\mathcal S}_1$.

(ii) According to  Corollary \ref{Cor3a} (ii), (iii) coexistence equilibrium points are located near ${\mathcal S}_2$ and ${\mathcal S}_3$. The equilibrium points on these branches are locally  stable  if they are not bifurcation points. In the case of bifurcation points the Jacobian matrix has one zero eigenvalue and three eigenvalues have negative real part. Therefore the Jacobian matrix for the branch $G_*(K)$ has also at least three eigenvalues with negative real part. Using that the Jacobian is positive for coexistence euilibrium points we conclude that the forth eigenvalue must also have negative real part,

(iii) According to  Corollary \ref{Cor3a} (iv) coexistence equilibrium points are located near ${\mathcal S}_4$. For $K\leq K_*$ this branch consists of stable equilibrium points if they are not bifurcation points and the same argument as in (ii) give the proof in this case also.
\end{proof}

\section{Geometry of branches of coexistence equilibrium points}


\subsection{Bifurcation branches of the general problem}

\begin{theorem} There exists positive $\widehat{\varepsilon}$ and  $c$ depending on $\alpha$, $\mu$, $\eta$  such that if $\varepsilon\leq\widehat{\varepsilon}$ and $\bar{\beta}+\bar{\gamma}\leq \varepsilon^2$ then
$$
|G_*(K)-S_j(K)|\leq c\varepsilon.
$$
 The index $j=1,2,3,4$, is chosen according to the inequalities for $\eta_1^*$ and $\eta_2^*$ described after Proposition~\ref{pro:equil} in \rm{(a)--(d)}.

\end{theorem}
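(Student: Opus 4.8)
The estimate is essentially a repackaging of Theorem~\ref{TJu9}, Lemma~\ref{LJ28} and Corollary~\ref{Cor3a}, which already place every coexistence equilibrium within $O(\varepsilon)$ of one of the vertices $G_{ijk}$ and pin down the $K$-interval that vertex occupies. So the plan is: fix $\varepsilon\leq\widehat{\varepsilon}$ small enough that Lemma~\ref{LJ28}, Proposition~\ref{P1} and Theorem~\ref{TJu9} all apply under $\bar\beta+\bar\gamma\leq\varepsilon^2$; split into the four cases for $(\eta_1^*,\eta_2^*)$ exactly as in Corollary~\ref{Cor3a}; and then glue the pieces, attending only to the short transition windows around the bifurcation values. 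Throughout, $G_*(K)$ denotes the global branch of (locally) stable equilibria: $G_{000}(K)$ for $0<K\leq\sigma_1$, then $G_{100}(K)$, and, when $\eta_1^*>1$, the coexistence branch of Theorem~\ref{TJu9} for $K>K_0$.

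In the case $\eta_1^*<1$ ($j=1$) there is, by Corollary~\ref{Cor3a}(i), no coexistence equilibrium; since $G_{000}(K)$ and $G_{100}(K)$ solve \eqref{SIRequilibriumrow1}--\eqref{SIRequilibriumrow4} for all $\beta,\gamma$, and the only positive $I_1^*$-root of the bifurcation relation \eqref{J5a} exceeds the a priori bound $r/\alpha_1$ when $\eta_1^*<1$ (so $\det H$ keeps its sign, positive as at $I_1^*=0$, along the branch $G_{100}(K)$, $K>\sigma_1$), the branch $G_*(K)$ coincides \emph{identically} with $\mathcal{S}_1(K)$ and the left-hand side of the estimate vanishes. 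For $\eta_1^*>1$ the ranges $K\leq\sigma_1$ and $\sigma_1\leq K\leq K_0$ are also immediate: on the former $G_*(K)=G_{000}(K)=\mathcal{S}_j(K)$; on the latter $G_*(K)=G_{100}(K)$, whereas $\mathcal{S}_j(K)$ equals $G_{100}(K)$ for $K<K_1$ and $G_{101}(K)$ for $K>K_1$, and since $G_{101}$ bifurcates off $G_{100}$ at $K_1$ these vertices agree at $K_1$ and are smooth, so $|G_{101}(K)-G_{100}(K)|\leq C|K-K_1|\leq C|K_0-K_1|=O(\beta_2\bar\gamma)=O(\varepsilon^2)$ by \eqref{K_0asymptotics}.

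The main range is $K>K_0$ with $\eta_1^*>1$. Here $G_*(K)$ is a coexistence equilibrium, so Lemma~\ref{LJ28}(i)--(iii) forces its sign pattern to be one of those in (iv)--(viii); the corresponding item then gives $G_*(K)=G_{ijk}(K)+O(\varepsilon)$ for a vertex $G_{ijk}\in\{G_{100},G_{101},G_{111},G_{011},G_{001}\}$ and confines $K$ to an interval $[K_a-c\varepsilon,\,K_b+c\varepsilon]$, while the accompanying restrictions on $\eta_1^*,\eta_2^*$ single out precisely the vertices that make up $\mathcal{S}_j$ in the case at hand --- this is the case-by-case bookkeeping already carried out in Corollary~\ref{Cor3a}. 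On the interior $(K_a,K_b)$ the explicit description (a)--(d) gives $\mathcal{S}_j(K)=G_{ijk}(K)$, hence $|G_*(K)-\mathcal{S}_j(K)|=O(\varepsilon)$; for $K$ within $c\varepsilon$ of an endpoint, continuity and smoothness of $\mathcal{S}_j$ across the bifurcation point give $|\mathcal{S}_j(K)-G_{ijk}(K)|\leq C|K-K_a|\leq Cc\varepsilon$, so the bound persists. Collecting the three ranges and enlarging $c$ completes the proof.

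\textbf{Main obstacle.} There is no new analytic estimate here; the difficulty is organizational. One must check that, in each case $j\in\{2,3,4\}$, the intervals produced by Lemma~\ref{LJ28}(iv)--(viii) genuinely tile $[\sigma_1,\infty)$, overlapping only on sets of width $O(\varepsilon)$ around the bifurcation values relevant to that $j$ ($\{\sigma_1,K_1,K_2\}$ for $j=2$, $\{\sigma_1,K_1,K_5,K_6,K_4\}$ for $j=3$, $\{\sigma_1,K_1,K_5\}$ for $j=4$), and that the perturbed bifurcation value $K_0$ opens no genuine gap against the unperturbed $K_1$; both are settled by the $O(\varepsilon^2)$ bound on $|K_0-K_1|$ from \eqref{K_0asymptotics} together with the continuity of the branches $\mathcal{S}_j$. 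A minor auxiliary point is to reconcile the cosmetic sign/typo mismatches between the interval endpoints stated in Lemma~\ref{LJ28} and those in (a)--(d); these shift endpoints by at most $O(\varepsilon)$ and are harmless.
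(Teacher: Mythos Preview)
Your proposal is correct and follows essentially the same route as the paper: the paper's proof is the one-liner ``follows directly from Corollary~\ref{Cor3a} and Theorem~\ref{TJu9}'', and your argument is precisely an unpacking of how those two ingredients combine (Theorem~\ref{TJu9} supplies the unique coexistence branch $G_*(K)$ for $K>K_0$, and Corollary~\ref{Cor3a} already contains the estimate $G(K)=\mathcal{S}_j(K)+O(\varepsilon)$ for every coexistence equilibrium). Your additional treatment of the ranges $K\le\sigma_1$ and $\sigma_1\le K\le K_0$, and of the $O(\varepsilon)$ transition windows near the unperturbed bifurcation values, goes a bit beyond what the paper writes out but is consistent with it and fills in details the paper leaves implicit.
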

\begin{proof} The proof of this assertion follows directly from Corollary \ref{Cor3a} and Theorem \ref{TJu9}.

\end{proof}

\section{Some auxiliary bifurcation results}\label{sectionbufurcation}

\subsection{Interior equilibrium point} \label{section Interior equilibrium point}

Let  $x=(x_1,\ldots,x_m)\in\Bbb R^m$ and $y=(y_1,\ldots,y_{n})\in\Bbb R^n$ be row vectors and let $s\in \Bbb R$ be  a parameter.
Consider the problem
\begin{equation}\label{1}
F(x,y;s)=0
\end{equation}
and
\begin{equation}\label{2}
yH(x,y;s)=0
\end{equation}
where $F=(F_1,\ldots,F_m)^T$ is a column vector and  $H=\{H_{ij}\}_{i,j=1}^n$ is an $n\times n$ matrix. We assume that all elements of $F_k$ and $H_{ij}$ are real valued $C^2$ functions
with respect to all variables.

It is assumed that 
$$
F(x^*,0;0)=0\;\;\mbox{for a certain $x^*\in \Bbb R^m$}
$$
and  the matrix
\begin{equation}\label{4}
\widehat{A}=\{\widehat{A}_{jk}\}_{j,k=1}^{n}=\{\partial_{x_j}F_k(x^*,0;0)\}_{j,k=1}^{n}\;\;\mbox{is invertible.}
\end{equation}
 Then by the implicit function theorem  the equation
\begin{equation}\label{5}
F(\widehat{x}(s),0;s)=0
\end{equation}
has solutions $\widehat{x}(s)$ for $s\in [-\epsilon,\epsilon]$  such that $\widehat{x}(0)=x^*$ and $\widehat{x}(\cdot)\in C^2([-\epsilon,\epsilon])$. Here $\epsilon$ is a  positive number. Thus the vector-function $(\widehat{x}(s),0)$ delivers a solution to problem (\ref{1}) on the interval $[-\epsilon,\epsilon]$.

\begin{remark}\label{remarklocalyunique} This solution is unique in the set
$$
N_\epsilon=\{(x,s):|x-x^*|\leq\epsilon, |s|\leq\sqrt{\epsilon}\}
$$
if $\epsilon$ satisfies (see \cite{part2}, Appendix)
$$
||\widehat{A}^{-1}||\,M_1\sqrt{\epsilon}\leq c_m,\;\;M_1:=\max_{N_\epsilon}\sum_{k+|\alpha|\leq 2}|\partial_x^\alpha\partial_s^kF(x,0;s)|.
$$
Here $c_m$ is a positive constant depending only on $m$. Moreover the  $C^2([-\epsilon,\epsilon])$-norm of the solution can be estimated by a polynomial of the quantities $||\widehat{A}^{-1}||$ and $M_1$.
\end{remark}

For a bifurcation to occur at $(x^*,0,0)$ it is necessary that the matrix
\begin{equation}\label{6}
\widehat{H}=H(x^*,0;0)\;\;\mbox{has a simple eigenvalue $0$.}
\end{equation}
This, in particular, implies existence of  row vectors $\widehat{e}$ and $\theta$ satisfying
\begin{equation}\label{7}
\widehat{H}\widehat{e}^T=0,\;\;\theta\widehat{H}=0\;\;\mbox{and}\;\;\theta \widehat{e}^T=1.
\end{equation}

We want to apply a similar bifurcation result for $n=1$ from \cite{part2}, Appendix. To do this we make a suitable change of variables in the next section.

\subsection{Change of variables}\label{section Change of variables}

Due to condition (\ref{6}) there exist a non-singular $n\times n$-matrix $S$ such that the matrix
\begin{equation}\label{8}
\Lambda=S^{-1}\widehat{H}S\;\;\mbox{has zeros in the last column and the last row.}
\end{equation}
 One can construct such matrix in the following way, for example. We write $S$ as $S=(S_1,\ldots,S_n)$, where $S_j$ is a column vector.
We assume that (i) $S_n=\widehat{e}^T$, (ii) $\theta S_j=0$ for $j=1,\ldots,n-1$ and (iii) the vectors $S_1,\ldots,S_{n-1}$ are linear independent. Let $e_n=(0,\ldots,0,1)$. By (ii) and the last relation in (\ref{7}) we have
$e_n=\theta S,$
which implies $e_n\Lambda=\theta HS=0$ due to the second relation in (\ref{7}). This proves that the last row in $\Lambda$ is zero. Since $Se_n^T=\widehat{e}^T$ we get from the first relation in (\ref{7}) that $\Lambda e_n^T=S^{-1}H\widehat{e}^T=0$, which proves that the last column in $\Lambda$ is zero. Thus (\ref{8}) is proved. It is useful to keep in mind the following relations
\begin{equation}\label{Apr10a}
\Lambda e_n^T=0,\;\;e_n\Lambda=0,\;\;Se_n^T=\widehat{e}^T\;\;\mbox{and}\;\;e_n=\theta S.
\end{equation}

Furthermore, we put
$$
Q(x,y;s)=S^{-1}H(x,y;s)S
$$
and introduce a new variable $z=(z_1,\ldots,z_n)$ by
$$
z_j=yS_j,\;j=1,\ldots,n,\;\;\mbox{or}\;\;z=yS.
$$
Then the system (\ref{1}), (\ref{2}) takes the form
\begin{equation}\label{2a}
F(x,zS^{-1};s)=0,\;\;\;zQ(x,zS^{-1};s)=0.
\end{equation}

We write $z$ as $(\widetilde{z},z_n)$ and corresponding  block form of $Q$ as
\begin{equation}\label{9}
Q=\begin{bmatrix}
\widetilde{Q}&Q_c\\
Q_r&Q_{n}
\end{bmatrix},
\end{equation}
where $\widetilde{Q}$ is $(n-1)\times (n-1)$ matrix, $Q_c$ is a column vector, $Q_r$ is a row vector of length $n-1$ and $Q_n$ is a scalar. Clearly,
\begin{equation}\label{9aa}
Q_c(x^*,0;0)=0,\;\;\;Q_r(x^*,0;0)=0,\;\;\;Q_{n}(x^*,0;0)=0
\end{equation}
and
$$
\begin{bmatrix}
\widetilde{Q}(x^*,0;0)&0\\
0&0
\end{bmatrix}=S^{-1}\widehat{H}S.
$$
Therefore the matrix
$$
\widehat{Q}:=\widetilde{Q}(x^*,0;0)\;\;\mbox{is invertible}
$$
due to the assumption (\ref{6}).
Solving the second equation in (\ref{2a}), we have
$$
\widetilde{z}=-z_nQ_r\widetilde{Q}^{-1}
$$
and
\begin{equation}\label{11}
z_n(Q_{n}-Q_r\widetilde{Q}^{-1}Q_c)=0.
\end{equation}
Since we are looking for a solution for which $z_n$ is not identically zero, we can write instead of (\ref{11})
\begin{equation}
Q_{n}-Q_r\widetilde{Q}^{-1}Q_c=0.
\end{equation}
So the final system for the second solution is
\begin{equation}\label{1x}
F(x,zS^{-1};s)=0,
\end{equation}
\begin{equation}\label{1xy}
\widetilde{z}\widetilde{Q}+z_nQ_c=0
\end{equation}
and
\begin{equation}\label{11x}
Q_{n}-Q_c\widetilde{Q}^{-1}Q_r=0.
\end{equation}
Since $(x,z)=(x^*,0)$  solves this system for $s=0$,  we are planning to apply the implicit function theorem to the system (\ref{1x})-(\ref{11x}).
From the definition of $Q$ it follows
\begin{equation}\label{12}
Q_{n}=e_nQe_n^T=e_nS^{-1}HSe_n^T=\theta H\widehat{e}^T.
\end{equation}
Furthermore, the Jacobian matrix of the left-hand side of \eqref{1x}-\eqref{11x} at the point $(x,z;s)=(x^*,0;0)$ is equal to
\begin{equation}\label{12a}
{\mathcal J}=\begin{bmatrix}
\widehat{A}&D_{\widetilde{z}}F&\partial_{z_n}F\\
0&\widetilde{Q}&0\\
\nabla_xQ_{n}&\nabla_{\widetilde{z}}Q_{n}&\partial_{z_n}Q_{n}
\end{bmatrix},
\end{equation}
where we have used that $Q_c$ and $Q_r$ equals zero at $(x,z;s)=(x^*,0;0)$. So the matrix (\ref{12a}) is invertible if the matrixes  $\widetilde{Q}$ and
$$
\mathcal{A}=\begin{bmatrix}
\widehat{A}&\partial_{z_n}F\\
\nabla_xQ_{n}&\partial_{z_n}Q_{n}
\end{bmatrix}
$$
 are invertible. This is true if the matrix $\widehat{A}$ is invertible and
 \begin{equation}\label{13a}
\omega:=-\frac{\det(\mathcal{A})}{\det(\widehat A)}=\Big(\nabla_xQ_{n}\widehat{A}^{-1}\partial_{z_n}F-\partial_{z_n}Q_{n}\Big)\Big|_{(x,y;s)=(x^*,0;0)}\neq 0.
 \end{equation}
Since $yS=z$ we have that
$$\frac{\partial y}{\partial z_n}S=e_n=\theta S$$
 so $\theta=\frac{\partial y}{\partial z_n}$ and  $\partial_{z_n}=\theta\cdot\nabla_y$.
 Using that $\partial_{z_n}=\theta\cdot\nabla_y$ together with (\ref{12}), we get the following expression for $\omega$ in $(x,y)$ variables
 \begin{equation}\label{J1a}
 \omega=\nabla_x\theta H(x^*,0;0)\widehat{e}^T\widehat{A}^{-1}\theta\cdot\nabla_yF(x^*,0;0)-\theta\cdot\nabla_y\theta H(x^*,0;0)\widehat{e}^T.
 \end{equation}

\begin{proposition}\label{Pr1a} Let the assumptions (\ref{4}), (\ref{6}) and (\ref{13a}) be valid. Then the
system (\ref{1x})-(\ref{11x}) is uniquely solvable in a neighborhood of $(x^*,0;0)$.
This solution $x(s),y(s)$  belongs to $C^2([-\sigma,\sigma])$ for a certain positive $\sigma$. Moreover
\begin{equation}\label{15a}
y(s)=\frac{\partial_sQ_{n}(\widehat{x}(s),0;s)|_{s=0}}{\omega}s\theta +O(s^2).
\end{equation}
\end{proposition}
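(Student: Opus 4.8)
The plan is to read off the bifurcating branch from the reduced system \eqref{1x}--\eqref{11x} by the implicit function theorem, and then to obtain the leading asymptotics \eqref{15a} by differentiating that system once at $s=0$. \textbf{Step 1 (existence and regularity).} The point $(x,\widetilde z,z_n)=(x^*,0,0)$ solves \eqref{1x}--\eqref{11x} at $s=0$, and the Jacobian $\mathcal J$ of its left-hand side with respect to $(x,\widetilde z,z_n)$ is the matrix \eqref{12a}, which, as noted right after \eqref{12a}, is invertible once $\widehat A$ is invertible (assumption \eqref{4}), $\widehat Q=\widetilde Q(x^*,0;0)$ is invertible (a consequence of \eqref{6}), and $\omega\neq0$ (assumption \eqref{13a}). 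Hence the implicit function theorem yields a unique $C^2$ solution $\big(x(s),\widetilde z(s),z_n(s)\big)$ on some interval $[-\sigma,\sigma]$ with value $(x^*,0,0)$ at $s=0$; setting $z=(\widetilde z,z_n)$ and $y(s)=z(s)S^{-1}$ recovers the corresponding solution of \eqref{1}--\eqref{2}, with the same regularity. Since $y(0)=z(0)S^{-1}=0$, proving \eqref{15a} reduces to computing $y'(0)$.

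\textbf{Step 2 (the $\widetilde z$-block is quadratically small).} Differentiating \eqref{1xy} at $s=0$ and using $\widetilde z(0)=0$, $z_n(0)=0$, $Q_c(x^*,0;0)=0$ (see \eqref{9aa}) together with the invertibility of $\widehat Q=\widetilde Q(x^*,0;0)$, every term except $\widetilde z'(0)\widehat Q$ drops out, so $\widetilde z'(0)=0$ and $\widetilde z(s)=O(s^2)$. Therefore $z'(0)=z_n'(0)\,e_n$, and since $e_nS^{-1}=\theta$ (equivalently $e_n=\theta S$, the last relation in \eqref{Apr10a}) we obtain $y'(0)=z'(0)S^{-1}=z_n'(0)\,\theta$, i.e. $y(s)=z_n'(0)\,s\,\theta+O(s^2)$. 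It remains only to identify the scalar $z_n'(0)$.

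\textbf{Step 3 (evaluating $z_n'(0)$).} Differentiate \eqref{11x} at $s=0$. Because $Q_c$ and $Q_r$ both vanish at $(x^*,0;0)$ by \eqref{9aa}, the $s$-derivative of the product term $Q_c\widetilde Q^{-1}Q_r$ vanishes there, and since $\widetilde z'(0)=0$ we are left with $\nabla_xQ_n\cdot x'(0)+\partial_{z_n}Q_n\,z_n'(0)+\partial_sQ_n=0$, all derivatives taken at $(x^*,0;0)$. Differentiating \eqref{1x} at $s=0$ (again using $\widetilde z'(0)=0$, and $\partial_{z_n}=\theta\cdot\nabla_y$) gives $\widehat A\,x'(0)=-\partial_{z_n}F\,z_n'(0)-\partial_sF$, hence $x'(0)=-\widehat A^{-1}\big(\partial_{z_n}F\,z_n'(0)+\partial_sF\big)$; the trivial branch of \eqref{5} correspondingly satisfies $\widehat x'(0)=-\widehat A^{-1}\partial_sF$. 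Substituting the expression for $x'(0)$ into the previous identity and collecting the $z_n'(0)$-terms, whose coefficient is exactly $-\omega$ by \eqref{13a}, yields $\omega\,z_n'(0)=\partial_sQ_n-\nabla_xQ_n\widehat A^{-1}\partial_sF=\partial_sQ_n+\nabla_xQ_n\cdot\widehat x'(0)=\partial_sQ_n(\widehat x(s),0;s)\big|_{s=0}$, the last expression being the total $s$-derivative of $Q_n$ along the trivial branch $y=0$. Dividing by $\omega\neq0$ and combining with Step 2 gives \eqref{15a}.

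\textbf{Main obstacle.} The only delicate point is the bookkeeping in Step 3: one must keep track of the row/column conventions, propagate the chain rule through the linear substitution $z=yS$ so that $\partial_{z_n}$ corresponds to $\theta\cdot\nabla_y$ (exactly as in the passage from \eqref{13a} to \eqref{J1a}), and, crucially, recognize that after eliminating $x'(0)$ the inhomogeneous term collapses precisely to the total $s$-derivative of $Q_n$ restricted to the trivial branch $\widehat x(s)$, which is what produces the closed form \eqref{15a}. Everything else is a routine application of the implicit function theorem.
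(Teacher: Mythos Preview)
Your proof is correct and follows essentially the same route as the paper's: apply the implicit function theorem using the invertibility of \eqref{12a}, read off $\widetilde z'(0)=0$ from \eqref{1xy}, then differentiate \eqref{1x} and \eqref{11x} to solve for $z_n'(0)$ and convert back via $e_nS^{-1}=\theta$. The only cosmetic difference is that the paper rewrites $Q_n$ and $F$ as $(Q_n-Q_n(\widehat x(s),0;s))+Q_n(\widehat x(s),0;s)$ before differentiating so that the total $s$-derivative along the trivial branch appears directly in \eqref{14a}, whereas you differentiate first and recognize that combination after eliminating $x'(0)$.
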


\begin{proof}
Let us evaluate the derivative $\partial_s(x(s),y(s))(s)$ at $s=0$. We start by evaluating the derivative $\partial_s(x,z)(s)$ at $s=0$.
From (\ref{1xy}) it follows that $\partial_s\widetilde{z}(0)=0$.  Differentiating (\ref{1x}) and (\ref{11x}) with respect to $s$ and using that $\frac{\partial y}{\partial z_n}=\theta$, we get
\begin{equation}\label{14}
\partial_{x_k}F(x^*,0;0)\dot{x}_k+\partial_{y_j}F(x^*,0;0)\dot{z}_n\theta_j+\partial_sF(x^*,0;0)=0
\end{equation}
and
\begin{equation}\label{14a}
\partial_{x_k}Q_{n}(x^*,0;0)(\dot{x}_k-\dot{\widehat{x}}_k)+\partial_{y_j}Q_{n}(x^*,0;0)\dot{z}_n\theta_j+\partial_sQ_{n}(\widehat{x}(s),0;s)|_{s=0}=0
\end{equation}
Here to obtain the first and the last terms in the left-hand side we used the representation $Q_n(x,y;s)=Q_n(x,y;s)-Q_n(\widehat{x}(s),0;s)+Q_n(\widehat{x}(s),0;s)$ and similarly we used that $F(x,y;s)=F(x,y;s)-F(\widehat{x}(s),0;s)$ in (\ref{14}) due to (\ref{5}). The relations (\ref{14}) and (\ref{14a}) implies
$$
-\partial_{x_k}Q_{n}(x^*,0;0)A^{-1}\partial_{y_j}Q_{n}(x^*,0;0)\dot{z}_n\theta_j+\partial_sQ_{n}(\widehat{x}(s),0;s)|_{s=0}=0,
$$
which gives
\begin{equation}\label{15}
z_n=\frac{\partial_sQ_{n}(\widehat{x}(s),0;s)|_{s=0}}{\omega}s+O(s^2)
\end{equation}
Since $(0,\ldots,0,z_n)S^{-1}=z_n\theta$ we arrive at (\ref{15a}).
\end{proof}

\begin{remark} The solution in Proposition (\ref{Pr1a}) is unique in a neighborhood
$$
{\mathcal N}_\sigma=\{(x,y;s):|s|\leq \sigma,\;\; |x-x^*|\leq\sqrt{\sigma},\;\;|y|\leq\sqrt{\sigma}\}.
$$
where $\sigma$ is a positive number depending on
$$
||\widehat{A}^{-1}||,\;\;||\widehat{Q}^{-1}||,\;\; |\omega|^{-1}\;\;\mbox{ and}\; \max_{{\mathcal N}_\sigma}|\partial_x^\alpha\partial_y^\beta\partial_s^k{\mathcal F}|,\;\mbox{ where} \;|\alpha|+|\beta|+k\leq 2.
$$
The $C^2$-norm depends also on the above quantities.
\end{remark}

\subsection{Jacobian matrix}

Consider the vector functions
\begin{equation*}
{\mathcal F}(x,y;s)=\begin{bmatrix}
F(x,y;s)\\
yH(x,y;s)
\end{bmatrix}
\end{equation*}
and
\begin{equation*}
{\mathcal G}(x,z;s)=\begin{bmatrix}
F(x,zS^{-1};s)\\
zQ(x,zS^{-1};s)
\end{bmatrix}.
\end{equation*}
Denote the corresponding Jacobian matrixes by ${\mathcal J}_{{\mathcal F}}$ and ${\mathcal J}_{{\mathcal G}}$ respectively. The first one is taken with respect to $(x,y)$ and the second one with respect to $(x,z)$. One can check by using the definitions of the vector functions and the change of variables that
\begin{equation*}
{\mathcal J}_{{\mathcal F}}(x,y;s)=\begin{bmatrix}
I&0\\
0&\widehat{A}
\end{bmatrix}{\mathcal J}_{{\mathcal G}}(x,z;s)\begin{bmatrix}
I&0\\
0&\widehat{A}^{-1}
\end{bmatrix}.
\end{equation*}
This implies that the eigenvalues of the jacobian matrixes ${\mathcal J}_{{\mathcal F}}$ and ${\mathcal J}_{{\mathcal G}}$ coincide.

\subsection{Stability analysis}

We have
\begin{equation*}
{\mathcal J}_{{\mathcal G}}(x,z;s)=\begin{bmatrix}
D_xF&D_{\widetilde{z}}F&D_{z_n}F\\
\widetilde{z}D_x\widetilde{Q}+z_nD_xQ_c & D_{\widetilde{z}}(\widetilde{z}\widetilde{Q}+z_nQ_c)&D_{z_n}(\widetilde{z}\widetilde{Q}+z_nQ_c))\\
D_x(\widetilde{z}Q_r+z_nQ_{n}) & D_{\widetilde{z}}(\widetilde{z}Q_r+z_nQ_{n}) & D_{z_n}(\widetilde{z}Q_r+z_nQ_{n})
\end{bmatrix}.
\end{equation*}
In particular
\begin{equation*}
{\mathcal J}_{{\mathcal G}}(x^*,0;0)=\begin{bmatrix}
\widehat{A}&D_{\widetilde{z}}F&D_{z_n}F\\
0 & \widetilde{Q}&0\\
0 & 0& 0
\end{bmatrix}.
\end{equation*}
By our assumption both matrixes $\widehat{A}$ and $\widetilde{Q}$ are invertible and so the matrix ${\mathcal J}_{{\mathcal G}}(x^*,0;0)$ has a simple eigenvalue $0$ and all other eigenvalues are non zeroes. This implies that the matrix ${\mathcal J}_{{\mathcal G}}(x(s),z(s);s)$ has the same eigenvalues up to terms of order $O(s)$ and one simple small eigenvalue of order $O(s)$. The last eigenvalue will be denoted by $\lambda(s)$.

To prove our main stability result we will need the following assertion, which can be verified straightworward
\begin{lemma}\label{L13a} Consider $N\times N$ matrix written in a block form
$$
{\mathcal R}=\begin{bmatrix}
{\mathcal R}_{11}&{\mathcal R}_{12}\\
{\mathcal R}_{21} & {\mathcal R}_{22}\\
\end{bmatrix}
$$
where ${\mathcal R}_{11}$ is an invertible $(N-1)\times (N-1)$ matrix, ${\mathcal R}_{12}$ and ${\mathcal R}_{21}$ are $1\times (N-1)$ and $(N-1)\times 1$ matrixes and ${\mathcal R}_{22}$ is a scalar. Then
$$
\det{\mathcal R}=({\mathcal R}_{22}-{\mathcal R}_{21}{\mathcal R}_{11}^{-1}{\mathcal R}_{12})\det{\mathcal R}_{11}.
$$
\end{lemma}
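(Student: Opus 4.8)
The plan is to prove the identity by exhibiting an explicit block factorization of $\mathcal R$ into triangular factors and then invoking multiplicativity of the determinant together with the formula for the determinant of a block-triangular matrix. Since $\mathcal R_{11}$ is assumed invertible, the Schur complement $\mathcal R_{22}-\mathcal R_{21}\mathcal R_{11}^{-1}\mathcal R_{12}$ is well defined (and is a scalar, as $\mathcal R_{22}$ is a $1\times 1$ block).

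First I would write
$$
\mathcal R=\begin{bmatrix}\mathcal R_{11}&\mathcal R_{12}\\ \mathcal R_{21}&\mathcal R_{22}\end{bmatrix}
=\begin{bmatrix} I_{N-1}&0\\ \mathcal R_{21}\mathcal R_{11}^{-1}&1\end{bmatrix}
\begin{bmatrix}\mathcal R_{11}&\mathcal R_{12}\\ 0&\mathcal R_{22}-\mathcal R_{21}\mathcal R_{11}^{-1}\mathcal R_{12}\end{bmatrix},
$$
which one checks by multiplying the two factors on the right-hand side block-by-block. Equivalently, this is the statement that subtracting $\mathcal R_{21}\mathcal R_{11}^{-1}$ times the first block-row from the last row turns $\mathcal R$ into the second factor, an elementary row operation, hence one leaving $\det\mathcal R$ unchanged.

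Then I would take determinants of both sides. The first factor is block lower-triangular with diagonal blocks $I_{N-1}$ and $1$, so it has determinant $1$. The second factor is block upper-triangular, so its determinant equals the product of the determinants of its diagonal blocks, namely $\det(\mathcal R_{11})\cdot(\mathcal R_{22}-\mathcal R_{21}\mathcal R_{11}^{-1}\mathcal R_{12})$, the last factor being a $1\times 1$ block equal to its own determinant. Multiplying gives the claimed formula. There is no genuine obstacle here; the only points deserving a word of care are that $\mathcal R_{11}^{-1}$ exists (this is the hypothesis) and that the determinant of a block-triangular matrix factors through the determinants of its diagonal blocks, which if desired can be made self-contained by Laplace expansion along the rows (or columns) of the zero block.
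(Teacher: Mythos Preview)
Your proof is correct; the block $LU$ factorization you write down is valid and taking determinants yields the Schur complement formula exactly as you describe. The paper does not give a proof of this lemma at all---it only remarks that the assertion ``can be verified straightforward''---so your argument is a clean and standard way to fill in that omitted verification.
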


\begin{proposition}\label{PK1} The following formula is valid
\begin{equation}\label{162}
\lambda(s)=-\partial_sQ_n(\widehat{x}(s),0;s)|_{s=0}s+O(s^2).
\end{equation}
\end{proposition}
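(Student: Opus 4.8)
The plan is to realize $\lambda(s)$ as the unique small root of a scalar (Schur--complement) equation extracted from the characteristic polynomial of ${\mathcal J}_{{\mathcal G}}$, and then to differentiate that equation once at $s=0$.

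First I would write $N=m+n$ and put $M(s,\lambda)={\mathcal J}_{{\mathcal G}}(x(s),z(s);s)-\lambda I$ into the block form of Lemma~\ref{L13a}, taking the last row and column to be the $z_n$-row and $z_n$-column. At $(s,\lambda)=(0,0)$ the leading $(N-1)\times(N-1)$ block is block triangular with diagonal blocks $\widehat A$ (invertible by \eqref{4}) and $\widehat Q=\widetilde Q(x^*,0;0)$ (invertible by \eqref{6}), hence $M_{11}(s,\lambda)$ stays invertible near the origin. By Lemma~\ref{L13a}, $\det M(s,\lambda)=\Phi(s,\lambda)\,\det M_{11}(s,\lambda)$ with $\Phi(s,\lambda):=M_{22}(s,\lambda)-M_{21}(s,\lambda)M_{11}(s,\lambda)^{-1}M_{12}(s,\lambda)$, so for small $s$ the simple small eigenvalue $\lambda(s)$ is precisely the root of $\Phi(s,\cdot)=0$ near $0$. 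Since $M_{21}$ and $M_{12}$ do not depend on $\lambda$ and $M_{21}(0,0)=0$ (the lower-left block of ${\mathcal J}_{{\mathcal G}}(x^*,0;0)$ vanishes), one has $\partial_\lambda\Phi(0,0)=-1\neq0$; the implicit function theorem then produces a $C^2$ function $\lambda(s)$ with $\lambda(0)=0$ and $\lambda'(0)=\partial_s\Phi(0,0)$.

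The core of the argument is the evaluation of $\partial_s\Phi(0,0)$, i.e.\ of the coefficient of $s$ in $\Phi(s,0)$, obtained by substituting the branch $(x(s),z(s))$ into the nonlinear entries of ${\mathcal J}_{{\mathcal G}}$ and keeping first-order terms. Here I would use that $\widetilde z(s)=O(s^2)$ (because $\partial_s\widetilde z(0)=0$, as in the proof of Proposition~\ref{Pr1a}), that $z_n(s)=\tfrac{1}{\omega}\,\partial_sQ_n(\widehat x(s),0;s)|_{s=0}\,s+O(s^2)$ by \eqref{15}, and that $\tfrac{d}{ds}(z(s)S^{-1})|_{s=0}=z_n'(0)\,e_nS^{-1}=z_n'(0)\,\theta$. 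A direct differentiation then shows that $M_{22}(s,0)$ has $s$-derivative $\nabla_xQ_n\cdot\dot x(0)+2z_n'(0)\,\partial_{z_n}Q_n+\partial_sQ_n$ at $s=0$ (all evaluated at $(x^*,0;0)$, using $\partial_{z_n}=\theta\cdot\nabla_y$), while $M_{21}(s,0)M_{11}(s,0)^{-1}M_{12}(s,0)$ has $s$-derivative $z_n'(0)\,\nabla_xQ_n\widehat A^{-1}\partial_{z_n}F$, which by \eqref{13a} equals $z_n'(0)(\omega+\partial_{z_n}Q_n)$. Subtracting yields
\[
\partial_s\Phi(0,0)=\nabla_xQ_n\cdot\dot x(0)+z_n'(0)\,\partial_{z_n}Q_n+\partial_sQ_n-z_n'(0)\,\omega .
\]

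To conclude, I would observe that the first three terms on the right vanish: their sum is precisely the left-hand side of the relation obtained by differentiating \eqref{11x} along the branch at $s=0$, namely \eqref{14a} rewritten with $z_n'(0)\theta=\tfrac{d}{ds}(z(s)S^{-1})|_{s=0}$ and $\partial_sQ_n(\widehat x(s),0;s)|_{s=0}=\nabla_xQ_n\cdot\dot{\widehat x}(0)+\partial_sQ_n$. Hence $\partial_s\Phi(0,0)=-z_n'(0)\,\omega=-\partial_sQ_n(\widehat x(s),0;s)|_{s=0}$ by \eqref{15}, and therefore $\lambda(s)=\lambda'(0)\,s+O(s^2)=-\partial_sQ_n(\widehat x(s),0;s)|_{s=0}\,s+O(s^2)$, which is \eqref{162}. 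The delicate point is the bookkeeping in the third paragraph — sorting the branch contributions into $O(s)$ versus $O(s^2)$, and then seeing the apparent $\omega$-dependence cancel through the interplay of the Schur-complement identity \eqref{13a} with the linearization \eqref{14a} of \eqref{11x}; the remaining steps are routine implicit-function-theorem computations of the kind already carried out for Proposition~\ref{Pr1a}.
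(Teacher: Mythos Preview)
Your proof is correct and is essentially the same as the paper's, with only a difference in packaging. Both arguments extract the leading coefficient of the small eigenvalue via the Schur complement of Lemma~\ref{L13a} applied to the $z_n$-block of ${\mathcal J}_{\mathcal G}$, and both rely on the same three inputs: $\widetilde z(s)=O(s^2)$, $Q_n(x(s),z(s);s)=O(s^2)$ along the branch (from \eqref{11x}), and the identity \eqref{13a} defining $\omega$, leading to $\lambda(s)=-z_n'(0)\,\omega\,s+O(s^2)$ and then \eqref{162} via \eqref{15}.

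The only methodological difference is in how $\lambda(s)$ is tied to the Schur complement. The paper first writes $\lambda(s)=\det{\mathcal J}_{\mathcal G}/(\det\widehat A\det\widehat Q)+O(s^2)$ by the product-of-eigenvalues argument, applies Lemma~\ref{L13a} to ${\mathcal J}_{\mathcal G}$ itself, and reads off the $O(s)$ term of the determinant directly. You instead apply Lemma~\ref{L13a} to ${\mathcal J}_{\mathcal G}-\lambda I$, define $\lambda(s)$ implicitly through $\Phi(s,\lambda)=0$, and compute $\lambda'(0)=\partial_s\Phi(0,0)$ via the implicit function theorem, cancelling terms with the aid of \eqref{14a}. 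Since $\Phi(s,0)=\det{\mathcal J}_{\mathcal G}/\det M_{11}(s,0)$ and $\det M_{11}(s,0)=\det\widehat A\det\widehat Q+O(s)$, the two computations are the same up to $O(s^2)$; the paper's route is a bit shorter because it avoids the explicit differentiation and the cancellation via \eqref{14a}, while yours makes the $C^2$ regularity of $\lambda(s)$ explicit.
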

\begin{proof} First let us show that
\begin{equation}\label{16}
\lambda(s)=\frac{\det {\mathcal J}_{{\mathcal G}}(x(s),z(s);s)}{\det\widehat{A}\det\widehat{Q}}+O(s^2).
\end{equation}

Denote by $\lambda_j(s)$, $j=1,\ldots,m+n,$ the eigenvalues of ${\mathcal J}_{{\mathcal G}}(x(s),z(s);s)$,
where there multiplicities are taken into account. We assume that $\lambda_{m+n}=\lambda$. Since the matrix ${\mathcal J}_{{\mathcal G}}(x(s),z(s);s)$ is $O(s)$ perturbation of the matrix ${\mathcal J}_{{\mathcal G}}(x^*,0;0)$,
$$
\lambda_1(s)\cdots\lambda_{m+n-1}(s)=\det\widehat{A}\det\widehat{Q}+O(s)\;\;\mbox{and}\;\;\lambda(s)=O(s).
$$
This implies (\ref{16}).

Applying Lemma \ref{L13a} to the matrix ${\mathcal J}_{{\mathcal G}}(x(s),z(s);s)$ and using that $\widetilde{z}(s)=O(s^2)$ and $z_n,Q_c,Q_r,Q_n=O(s)$ we obtain
\begin{eqnarray*}
\det{\mathcal J}_{{\mathcal G}}(x(s),z(s);s)
 =\det\Big(\begin{bmatrix}
A&D_{\widetilde{z}}F\\
0 & \widehat{Q}\\
\end{bmatrix}+O(s)\Big)
\Big(\partial_{z_n}(z_nQ_n)-z_n\nabla_xQ_nA^{-1}\partial_{z_n}F+O(s^2)\Big)
\end{eqnarray*}
Therefore
$$
\det{\mathcal J}_{{\mathcal G}}(x(s),z(s);s)=\det A\det\widehat{Q}\Big(\partial_{z_n}(z_nQ_n)-z_n\nabla_xQ_nA^{-1}\partial_{z_n}F\Big)+O(s^2).
$$
Finally from equation (\ref{11x}) it follows that $Q_n(x(s),z(s);s)=O(s^2)$ therefore
$$
\det{\mathcal J}_{{\mathcal G}}(x(s),z(s);s)=-z_n\omega\det A\det\widehat{Q}+O(s^2).
$$
From (\ref{16}) it follows that
$$
\lambda(s)=-z_n\omega+O(s^2).
$$
Now by using (\ref{15}), we arrive at (\ref{162}).
\end{proof}

\bibliographystyle{plain}%

\end{document}